\documentclass[a4paper,10pt,reqno]{amsart}
\usepackage{xcolor}
\usepackage{graphicx}
\usepackage{amsmath}
\usepackage{amssymb}
\usepackage{comment}

\theoremstyle{plain}
\newtheorem{theorem}{Theorem}[section]

\newtheorem{lemma}[theorem]{Lemma}

\newtheorem{remark}[theorem]{Remark}

\newcommand{\C}{\mathbb{C}}
\newcommand{\N}{\mathbb{N}}

\newcommand{\R}{\mathbb{R}}

\newcommand{\dC}{\mathbb{C}}

\newcommand{\dR}{\mathbb{R}}

\newcommand{\dZ}{\mathbb{Z}}

\newcommand{\cB}{\mathcal{B}}

\newcommand{\cN}{\mathcal{N}}

\newcommand{\eps}{\varepsilon}

\let\altphi\phi
\let\phi\varphi
\let\varphi\altphi
\let\altphi\undefined


\begin{document}
	
	\title[Nilpotent Fundamental Groups and Positive Ricci Curvature]{Compact Manifolds with Unbounded Nilpotent Fundamental Groups and Positive Ricci Curvature}
	
	\author{Elia Bruè, Aaron Naber, and Daniele Semola}
	
	\maketitle{}
	
	\begin{abstract}
	It follows from the work of Kapovitch and Wilking that a closed manifold with nonnegative Ricci curvature has a uniformly almost nilpotent fundamental group.  Leftover questions and conjectures, see \cite{KapovitchWilking} and \cite{PanRong}, have asked if in this context the fundamental group is actually uniformly almost abelian. 
	
	The main goal of this work is to construct examples $(M^{9}_k, g_k)$ with uniformly positive Ricci curvature ${\rm Ric}_{g_k}\geq 8$ whose fundamental groups 
	cannot be uniformly virtually abelian.
	\end{abstract}
	
\section{Introduction}

A consequence of the work of Kapovitch and Wilking in \cite{KapovitchWilking} for a closed manifold $(M^n,g)$ with $\mathrm{Ric}\ge 0$ is that the fundamental group $\pi_1(M^n)$ has a nilpotent subgroup $N\leq \pi_1(M)$ of uniformly bounded index $[N,\pi_1(M)]\leq C(n)$.  An important open question from their work, see \cite[page 48]{KapovitchWilking}, is whether this nilpotent subgroup $N$ can be taken to be abelian.  More generally, it is asked whether for a space with $\mathrm{Ric}\geq -(n-1)$ the torsion of the local fundamental group\footnote{$\mathrm{Im}\Big(\pi_1(B_{\epsilon(n)}(p))\to \pi_1(B_1(p))\Big)$} can be taken to be uniformly almost abelian.

The question of Kapovitch and Wilking, and conjectures of Fukaya and Yamaguchi, have been quantitatively refined into conjecture by Pan and Rong, see \cite[Conjecture 2.22]{PanRong} and \cite[Conjecture 12]{SantosZamora}; namely, it is conjectured that compact spaces with nonnegative Ricci curvature have fundamental groups which are uniformly almost abelian. The results of \cite{MazurRongWang} and \cite{SantosZamora} are able to answer these conjectures in the affirmative under the additional hypothesis that the manifolds are noncollapsing; more precisely, the index of the abelian subgroup is bounded in terms of a constant depending on the volume of unit balls in the universal cover.\\

The main goal of this paper is to answer these questions in the general case with the following construction:\\

\begin{theorem}\label{thm:mainIntro}
  There exists a sequence of smooth $9$-dimensional Riemannian manifolds $(N_k^{9}, g_k)$ such that: 
  \begin{enumerate}
  \item  $\mathrm{Ric}_{g_k} \geq 8$, and consequently $\mathrm{diam} (N_k)\leq \mathrm{diam} (\widetilde N_k)\leq \pi$ with $\widetilde N_k$ the universal cover of $N_k$;
  \item The fundamental group $\pi_1(N_k)$ is a degree-two extension of the finite Heisenberg group $H_3(\mathbb{Z}/k\mathbb{Z})$ for each $k\in\mathbb{N}$.	
  \end{enumerate}
\end{theorem}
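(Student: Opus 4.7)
The plan is to construct $M_k$ as the free isometric quotient of a simply connected compact 10-manifold $\widetilde M_k$ by the required degree-two extension $G_k$ of $H_3(\mathbb{Z}/k\mathbb{Z})$. Since $|G_k|=2k^3$ grows without bound while $\dim M_k=10$ is fixed, multiplicativity of the Euler characteristic under finite covers forces $\chi(\widetilde M_k)=0$ for large $k$; this rules out $\widetilde M_k = S^{10}$ and points towards a product whose factors include odd-dimensional spheres, such as $S^3\times S^7$, $S^5\times S^5$, or $S^3\times S^3\times S^4$, or else a Lie-group variant such as $SU(3)\times S^2$. A correctly scaled round product metric on $\widetilde M_k$ automatically gives $\mathrm{Ric}_{\widetilde g_k}\ge 9$, and the diameter bound then follows from Myers' theorem, so the core geometric content is constructing a free isometric $G_k$-action.

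The finite Heisenberg group $H_3(\mathbb{Z}/k\mathbb{Z})$ embeds in $U(k)$ via the clock and shift operators
\[
C=\mathrm{diag}(1,\omega,\omega^2,\ldots,\omega^{k-1}),\qquad S\,e_i=e_{i+1\bmod k},
\]
which satisfy $CSC^{-1}S^{-1}=\omega\cdot I$ with $\omega=e^{2\pi i/k}$ and hence act isometrically on $S^{2k-1}\subset\mathbb{C}^k$. This action is not free, because $S$ has fixed eigenvectors in $\mathbb{C}^k$; the natural remedy is to take the diagonal of this representation with a second linear action on another factor of $\widetilde M_k$ that kills all residual fixed loci of non-central elements (for example, a factor on which a cyclic subgroup of $H_3(\mathbb{Z}/k\mathbb{Z})$ acts freely in a way compatible with the Heisenberg structure). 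The degree-two extension is then obtained from an involution $\iota$ of $\widetilde M_k$ --- for instance, complex conjugation combined with a swap of isomorphic factors --- that normalizes but does not centralize the Heisenberg subgroup, inducing on it a non-trivial outer automorphism such as inversion; the group $G_k=\langle H_3(\mathbb{Z}/k\mathbb{Z}),\iota\rangle$ is the desired degree-two extension.

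The final step is to check that the combined $G_k$-action on $\widetilde M_k$ is free, after which $M_k=\widetilde M_k/G_k$ inherits the metric $\widetilde g_k$ with $\mathrm{Ric}_{g_k}\ge 9$ and the conclusions of the theorem follow immediately. The main obstacle is precisely this freeness verification: the $k$-dimensional irreducible representation of $H_3(\mathbb{Z}/k\mathbb{Z})$ always has non-trivial fixed subspaces for every non-central element, so no single sphere factor of $\widetilde M_k$ can carry the whole Heisenberg action freely, and one must distribute the Heisenberg generators among the factors in such a way that every non-identity element of $G_k$ fixes no point of the product. Uniform freeness as $k \to \infty$ is the delicate point; if a straightforward product ambient space proves insufficient, one is led to realize $\widetilde M_k$ as a twisted principal circle (or more general fiber) bundle over a product of lens spaces, with the positive Ricci bound then recovered via the O'Neill formulas for the associated Riemannian submersion. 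In that case the Heisenberg relations are encoded in the interaction between the bundle twist and the base monodromy, rather than in a single linear representation, and the argument reduces to choosing bundle data that realize the prescribed central extension while keeping the connection curvature small enough to preserve $\mathrm{Ric}\ge 9$.
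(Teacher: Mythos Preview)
Your primary approach cannot work, and the obstruction is not the freeness of the action but something more basic: for large $k$ the group $H_3(\mathbb{Z}/k\mathbb{Z})$ does not even embed in the isometry group of any product of round spheres of total dimension $10$. The isometry group of such a product is a fixed compact Lie group, a product of orthogonal groups of bounded rank (together with a finite permutation factor), and it has a faithful linear representation of bounded dimension independent of the radii. Jordan's theorem then gives a constant $J$ such that every finite subgroup of that isometry group contains an abelian normal subgroup of index at most $J$. Since the minimal index of an abelian subgroup of $H_3(\mathbb{Z}/k\mathbb{Z})$ is $k$, no homomorphism into this isometry group can be injective once $k>J$. The same objection applies verbatim to your suggestion of $SU(3)\times S^2$, and indeed to any sequence $\widetilde M_k$ whose isometry groups sit inside a common linear group. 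This is also consistent with the noncollapsing results you would be contradicting: if the universal covers were a fixed round product, unit balls would have volume bounded below and the Mazur--Rong--Wang / Santos--Zamora theorems cited in the introduction would force uniform virtual abelianness.

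The paper escapes Jordan's theorem by letting both the topology and the geometry of the universal cover genuinely depend on $k$. It first builds, via a Gibbons--Hawking ansatz over $S^3$, a simply connected four-manifold $(M_k^4,g_k)\cong \#^{\,k-1}(S^2\times S^2)$ with $0<\mathrm{Ric}\le 3$ carrying an effective (but not free) isometric $H_3(\mathbb{Z}/k\mathbb{Z})$-action. The $10$-manifold is then the principal $\mathrm{Spin}(4)$-bundle of $M_k^4$: isometries of $M_k^4$ lift to the oriented frame bundle $FM_k^{10}$ and act freely there because an isometry fixing a frame is the identity; since $M_k^4$ is spin, $\pi_1(FM_k)=\mathbb{Z}/2\mathbb{Z}$, and passing to the simply connected double cover produces the degree-two extension of $H_3(\mathbb{Z}/k\mathbb{Z})$. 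The positive Ricci lower bound on the total space comes from the O'Neill formulas with the fiber scale $d_k$ chosen small relative to $|\mathrm{Rm}_{g_k}|$ and $|\nabla\mathrm{Rm}_{g_k}|$, which are unbounded in $k$ (as they must be, by Gromov's Betti number bound). Your fallback sentence about ``twisted bundles and O'Neill'' gestures in the right direction, but the substantive step---producing the base four-manifolds with Heisenberg symmetry---is precisely the content of Theorem~\ref{thm:HeisenberIntro} and is where all the work lies.
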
 
\vspace{.3cm}

The interest of Theorem \ref{thm:mainIntro} stems from the fact that the groups $H_3(\mathbb{Z}/k\mathbb{Z})$, see Section \ref{sec:HeisenbergNil}, are not \textit{uniformly virtually abelian} with respect to $k$. That is, the index of every abelian subgroup $A_k \leq H_3(\mathbb{Z}/k\mathbb{Z})$ diverges as $k \to \infty$.  In particular, crossing our example with a flat torus we answer the question of Kapovitch and Wilking and conjectures of \cite[Conjecture 2.22]{PanRong} and \cite[Conjecture 12]{SantosZamora} in the negative for every dimension $n\ge 9$.

As such, Theorem \ref{thm:mainIntro} also refutes the natural counterpart for Ricci curvature of some long-standing conjectures of Fukaya and Yamaguchi for fundamental groups of manifolds with $\mathrm{sec}\ge 0$.  In the context of noncompact manifolds with torsion-free fundamental group, see also the excellent example of Wei \cite{Wei88}.  One is still left with the original conjecture of Fukaya and Yamaguchi in the context of nonnegative sectional curvature, which is very much open.  Additionally, if we compare to \cite{KPTcenter}, we see that the {\it torsion in the center} conjecture for spaces with nonnegative sectional curvature must fail under the weaker nonnegative Ricci assumption.  

Theorem \ref{thm:mainIntro} is also connected to another series of questions which remain open.  In \cite{BNS_Milnor} and \cite{BNS_Milnor6} examples of manifolds with nonnegative Ricci curvature and infinitely generated fundamental group were constructed.  The infinite generation was however abelian in nature. For instance, it is unknown if there always exists a normal abelian subgroup $A\leq \pi_1(M)$ such that the quotient $\pi_1(M)/A$ is finitely generated. \\

The main step in the construction of the spaces $N^9_k$ in Theorem \ref{thm:mainIntro} is based on the construction of simply connected four-manifolds $M^4_k$ with effective isometric actions by $H_3(\dZ/k\dZ)$.  These four-manifolds even have uniformly bounded Ricci curvature.  However, the actions of $H_3(\dZ/k\dZ)$ on the $M^4_k$ are not free.  We will construct the universal covers $\widetilde N^{9}_k$ from $M^4_k$ by looking at the spin bundles of $M^4_k$ and lifting the actions of $H_3(\dZ/k\dZ)$ to {\it free} actions.  In particular, this also explains the $\dZ/2\dZ$ extension of Theorem \ref{thm:mainIntro}. To reduce the dimension of the spin bundle, we take the quotient with respect to a circle action with totally geodesic fibers, following the suggestion of an anonymous referee, whom we thank.\\

Our precise construction is the following:\\

\begin{theorem}\label{thm:HeisenberIntro}
	For every $k \in \mathbb{N}$ there exists a closed, simply connected $4$-manifold $(M_k^4, g_k)$ with $\mathrm{diam}(M_k) \leq \pi$ and $0 < \mathrm{Ric}_{g_k} \leq 3$, admitting an effective isometric action of the finite Heisenberg group $H_3(\mathbb{Z}/k\mathbb{Z})$.
\end{theorem}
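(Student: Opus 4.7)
The strategy is to exploit the central extension
\[
1 \longrightarrow \mathbb{Z}/k\mathbb{Z} \longrightarrow H_3(\mathbb{Z}/k\mathbb{Z}) \longrightarrow (\mathbb{Z}/k\mathbb{Z})^2 \longrightarrow 1
\]
geometrically: look for a simply-connected 4-manifold $M_k^4$ carrying two commuting cyclic symmetries of order $k$ whose lifts to some natural bundle structure on $M_k^4$ fail to commute by a third, central, cyclic symmetry of order $k$. This is the finite-geometric analogue of the clock-and-shift realization $H_3(\mathbb{Z}/k\mathbb{Z}) \subset U(k)$, where the two commuting generators of $(\mathbb{Z}/k\mathbb{Z})^2 \subset PU(k)$ lift to non-commuting elements of $U(k)$ with central commutator.

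The concrete topological candidate would be a sphere or projective bundle whose twist depends on $k$---for example a Hirzebruch-type surface $\mathbb{P}(L \oplus \cO) \to B$ with $L$ a line bundle of Chern class proportional to $k$, or a similar fibration. On such a space one writes down three cyclic isometries $X, Y, Z$ of order $k$ (two arising from rotations of the base and fibre, and the third from the distinguished fibre rotation built into the bundle twist), verifies by a local calculation on the transition cocycle of $L$ that $[X, Y] = Z$ and that $Z$ is central, and checks effectiveness of the action from the fact that $Z$ acts non-trivially on the fibres. Simple connectedness of $M_k^4$ follows automatically from the long exact sequence of the fibration once base and fibre are both simply connected.

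The Riemannian step is then to endow $M_k^4$ with an $H_3(\mathbb{Z}/k\mathbb{Z})$-invariant metric satisfying $0 < \mathrm{Ric}_{g_k} \le 3$ uniformly in $k$. The natural ansatz is a Kaluza--Klein / connection metric: combine a Fubini--Study-type base metric with a Fubini--Study-type fibre metric through a principal $U(1)$-connection whose curvature form represents the Chern class of $L$. Plugging into the O'Neill formulas expresses $\mathrm{Ric}_{g_k}$ as a combination of the base Ricci, the fibre Ricci and the $A$-tensor of the Riemannian submersion; by scaling the fibre radius and the connection simultaneously (a Berger-type collapse of the fibre at rate $\sim 1/k$) one aims to keep all three contributions bounded as $k \to \infty$ and the net Ricci uniformly two-sided bounded. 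The diameter bound $\mathrm{diam}(M_k) \le \pi$ is then automatic from Myers's theorem applied to the resulting positive lower bound on Ricci.

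The main technical obstacle will be precisely this uniform two-sided Ricci bound: the bundle twist grows linearly in $k$, so a careless metric choice makes either the horizontal Ricci (driven by the $A$-tensor, of size $\sim k$) or the vertical Ricci (driven by the collapsing fibre) blow up. The delicate balancing needed to keep $\mathrm{Ric}_{g_k}$ sandwiched in a fixed positive range while \emph{exactly} preserving the discrete $H_3(\mathbb{Z}/k\mathbb{Z})$-symmetry---rather than only in a Gromov--Hausdorff limit---is where the heart of the construction lies, and is the step I would expect to occupy the bulk of the proof; it is also where the authors' earlier experience with warped-product constructions realizing prescribed Ricci bounds together with unbounded fundamental groups should play a decisive role.
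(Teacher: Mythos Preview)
Your high-level picture is right and matches the paper: one builds an $S^1$-bundle whose Chern class is proportional to $k$, lifts a $(\mathbb{Z}/k\mathbb{Z})^2$ action on the base to an effective $H_3(\mathbb{Z}/k\mathbb{Z})$ action on the total space, and controls Ricci via a connection metric with fibre collapsed at rate $\sim 1/k$. But the concrete realization you propose has a genuine gap.

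A smooth $\mathbb{P}(L\oplus\cO)\to B$ with $B$ and fibre both simply-connected $2$-spheres is a Hirzebruch surface; it is toric, and the two obvious circle rotations \emph{commute}, so the discrete subgroup they generate is abelian, not Heisenberg. The clock-and-shift mechanism you invoke requires the $(\mathbb{Z}/k\mathbb{Z})^2$ to act by \emph{translations} on a torus base of a degree-$k$ circle bundle---that is exactly the nilmanifold ${\rm Nil}_k^3$---but a torus base kills simple-connectedness. The paper's key topological idea is to reconcile these: take an $S^1$-bundle over $S^3$ (not over a surface) with $2k$ isolated point singularities where the fibre collapses, arranged along two Hopf circles. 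Generic $3$-dimensional slices of the resulting $M_k^4$ are copies of ${\rm Nil}_k^3$, on which the Heisenberg action lives naturally; the singular fibres (Kodaira $I_k$) are what make $M_k^4$ simply connected (indeed $M_k^4 \cong \#^{k-1}(S^2\times S^2)$, not a Hirzebruch surface). The metric is correspondingly a Gibbons--Hawking ansatz built from a Green's function on $S^3$ with $k$ positive and $k$ negative poles; the sign change of this potential between the two Hopf circles is the delicate point, not the O'Neill balancing you anticipate.

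One further error: the diameter bound $\mathrm{diam}(M_k)\leq \pi$ does \emph{not} follow from Myers. The theorem asserts only $\mathrm{Ric}_{g_k}>0$ as a lower bound (the $3$ is an upper bound), so Myers gives nothing uniform. In the paper the diameter bound is proved directly: the base of the $S^1$-bundle is conformally close to the round $S^3$ away from the poles, the fibres are uniformly short, and the singular regions are shown by an explicit curve-length estimate to have small diameter.
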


Each $M_k^4$ above is diffeomorphic to the connected sum of $(k-1)$ copies of $S^2\times S^2$. We refer the reader to Section \ref{sec:structure of M} for more details on this and on the geometry of the examples.

\begin{remark}
	Theorem \ref{thm:HeisenberIntro} is optimal in at least two senses:
	\begin{itemize}
		\item[i)] The family of finite groups $\Gamma$ that act smoothly and effectively on a sphere \(S^n\) for \(n = 2, 3\) is uniformly virtually abelian. See for instance \cite{DinkelbachLeeb}. In particular, there are no three dimensional examples as above.

		\item[ii)] In dimension four, the Fukaya-Yamaguchi conjecture has been proved in \cite{BNS_FY4d}. More generally, by \cite[Theorem 1.2]{Mundet19} and \cite{Gromovbetti}, there is no family of four dimensional examples as above with positive sectional curvature.  See \cite{BNS_FY4d} for the argument.
	\end{itemize}
\end{remark}

\vspace{.2cm}

\subsection*{Broader Mathematical Context}
We recall that the fundamental group of every closed manifold with $\mathrm{Ric}>0$ is finite, as a consequence of Myers' theorem.  It is not the case that one has uniform finiteness, the easiest examples being the Lens spaces in dimension three.  Similarly, one also knows that the fundamental group of every closed manifold with $\mathrm{Ric}\ge 0$ is virtually abelian, as a corollary of the Cheeger-Gromoll splitting theorem, see \cite[Theorem 3]{CheegerGromollRic}.  

On the other hand, in the noncompact case Wei constructed a family of complete $(M^n,g)$ with $\mathrm{Ric}\ge 0$ and for which $\pi_1$ is a nilpotent lattice \cite{Wei88}. Wilking used Wei's construction later in \cite{Wilking00} and proved that any finitely generated virtually nilpotent group is the fundamental group of some complete (noncompact) $(M^n,g)$ with $\mathrm{Ric}\ge 0$, for some $n\in\mathbb{N}$.  

For manifolds $(M^n,g)$ with nonnegative sectional curvature, fundamental groups are virtually abelian without any compactness assumption, by the Cheeger-Gromoll soul theorem \cite{CheegerGromollsoul} (combined again with the splitting theorem). Fukaya and Yamaguchi conjectured in \cite{FukayaYamaguchi} that the fundamental group of any $(M^n,g)$ with $\mathrm{sec}\ge 0$ should have an abelian subgroup with index bounded by a dimensional constant $C(n)$. Their conjecture remains open.\\ 

Kapovitch, Petrunin, and Tuschmann proved that fundamental groups of almost nonnegatively curved manifolds in any fixed dimension are uniformly virtually nilpotent in \cite{KPTAnn} after some earlier progress due to Fukaya and Yamaguchi in \cite{FukayaYamaguchi}. They conjectured that one should be able to arrange the finite-index nilpotent subgroup to have torsion contained in the center, see \cite[Main Conjecture 6.1.2]{KPTAnn}. If the ``Torsion in the center'' conjecture holds, then the Fukaya-Yamaguchi conjecture would be true as well, as observed by Wilking, see \cite[Section 6]{KPTAnn}.

We refer the reader to~\cite{Rong96,MazurRongWang,KPTcenter} for several partial results in the direction of the Fukaya--Yamaguchi conjecture; this list is not intended to be exhaustive.

\subsection*{Acknowledgments} Part of this work was conducted while D.S. was a Hermann Weyl Instructor. He is grateful to the FIM-Institute for Mathematical Research of ETH Z\"urich for the support and the excellent working conditions.

The authors are grateful to Shengxuan Zhou and to the anonymous reviewers for useful comments that helped improving a preliminary version of the note.

\section{Preliminaries}

\subsection{Heisenberg Groups}\label{sec:Heisenberg}
	The $3$-dimensional Heisenberg group over a commutative ring $(A,+,\cdot)$ is defined as
\begin{equation}
	H_3(A) := \left\lbrace 
	\left(\begin{matrix}
		1 & a & c
		\\
		0 & 1 & b
		\\
		0 & 0 & 1
	\end{matrix}\right)
	\, : \, a,b,c\in A
	\right\rbrace \, < \mathrm{GL}_3(A)\, .
\end{equation}
In the case that $(A,+)$ is generated by the identity $1\in A$, we have that $H_3(A)$ is generated as a group by the two elements
\begin{equation}
	X:= 	\left(\begin{matrix}
		1 & 1 & 0
		\\
		0 & 1 & 0
		\\
		0 & 0 & 1
	\end{matrix}\right) \, ,
	\quad 
	Y:= 	\left(\begin{matrix}
		1 & 0 & 0
		\\
		0 & 1 & 1
		\\
		0 & 0 & 1
	\end{matrix}\right) \, .
\end{equation}
If we set 
\begin{equation}
	Z:= 	\left(\begin{matrix}
		1 & 0 & 1
		\\
		0 & 1 & 0
		\\
		0 & 0 & 1
	\end{matrix}\right) \, ,
\end{equation}
then we have the relations
\begin{equation}
	XYX^{-1}Y^{-1} = Z \, , \quad XZ=ZX\, , \quad YZ = ZY \, .
\end{equation}	

We will mainly be interested in the cases $A=\R$, $A=\mathbb{Z}$, and $A=\mathbb{Z}/k\mathbb{Z}$ with $k\in\mathbb{N}$.

\subsection{Heisenberg Nilmanifolds and Group Actions}\label{sec:HeisenbergNil}

Fix $k\in \mathbb{Z}$.
We consider the free action of $\mathbb{Z}^2$ on $\R^2 \times S^1$ defined by
\begin{equation}
	(a,b)\cdot (x,y,z) := (x + a, y +  b, e^{- 2\pi i k a y}z)\, ,
\end{equation}
for every $(a,b)\in \mathbb{Z}^2$ and $(x,y,z)\in \R^2 \times S^1\subseteq \dR^2\times \dC$.
The quotient ${\rm Nil}_k^3$ is the so-called {\it Heisenberg nilmanifold of degree $k \in \mathbb{Z}$}. It admits a free, smooth $S^1$-action defined by 
\begin{equation}
	\theta \cdot [x,y,z] = [x,y,e^{i\theta} z] \, , \quad \theta \in S^1 \, ,
\end{equation}
where $[x,y,z]$ denotes the equivalence class of $(x,y,z) \in \mathbb{R}^2 \times S^1$. This $S^1$-action endows every Heisenberg nilmanifold ${\rm Nil}_k^3$ with the structure of a principal $S^1$-bundle over $T^2$, with projection map $\pi_k : {\rm Nil}_k^3 \to T^2$ induced by the natural map $\mathbb{R}^2 \times S^1 \ni (x,y,z) \mapsto (x,y) \in \R^2$.

\medskip

Assume that $k > 0$ for notational simplicity. A similar construction applies for $k < 0$.
 The nilmanifold ${\rm Nil}_k^3$ admits a free effective action of the finite Heisenberg group $H_3(\mathbb{Z}/k\mathbb{Z})$, whose generators act as
 \begin{equation}\label{eq:widetilderho}
	\begin{split}
		&X\cdot [x,y,z] := [x + 1/k, y, e^{- 2\pi i y}z] \, , \\
		&Y\cdot [x,y,z] := [x, y + 1/k, z] \, .
	\end{split}
\end{equation}
The commutator of the $X$ and $Y$ action gives the action by $Z$, which generates a cyclic group $\mathbb{Z}/k\mathbb{Z}$ acting by rotation on the $S^1$-fibers of ${\rm Nil}_k^3$:
 \begin{equation}
	\begin{split}
		&Z\cdot [x,y,z] := [x, y,e^{-2\pi i/k} z] \, .
	\end{split}
\end{equation}

Note that the actions of $X$ and $Y$ are lifted from the torus action
 \begin{equation}
	\begin{split}
		&X\cdot [x,y] := [x + 1/k, y] \, , \\
		&Y\cdot [x,y] := [x, y + 1/k] \, ,
	\end{split}
\end{equation}
where $[x,y] \in T^2 = \R^2/\mathbb{Z}^2$ denotes the equivalence class of $(x,y) \in \mathbb{R}^2$.\\

There are several equivalent ways to build the action of $H_3(\mathbb{Z}/k\mathbb{Z})$ on a nilmanifold ${\rm Nil}_k^3$.  The construction we have followed here has been exploited in the past, for instance in \cite{Zarhin} (resp. \cite{CsikosBalazsSzabo}) to construct an example of an irreducible complex surface (resp. smooth, closed $4$-manifold) such that the finite subgroups of the group of birational transformations (resp. diffeomorphisms) are not uniformly virtually abelian. See also the more recent \cite[Section 2]{CsikosBalazsSzabo2}.\\

\section{Structure of $M_k^4$}
\label{sec:structure of M}

In this section, we outline the diffeomorphic and metric structure of the manifolds $(M^4_k, g_k)$ built in Theorem \ref{thm:HeisenberIntro}.  We will go about the construction carefully and rigorously in the next sections.

\subsection{Sketch of Construction}

Our construction of $M_k^4$ is based on a variant of the Gibbons-Hawking ansatz (see for instance \cite{GibbonsHawking,AndersonKronheimerLebrun,LeBrun,GrossWilson}) with base space the three-sphere $S^3$. We consider two distinct circle fibers $F_0$ and $F_1$ of the Hopf fibration $S^3\to S^2$, viewed as the fibers corresponding to the north and south poles of $S^2$.  We remove $k$ points from each of the two chosen fibers, in a way so that discrete Hopf rotation by angle $2\pi/k$ leaves the set invariant.  There is a $\mathbb{Z}/k\mathbb{Z}\times \mathbb{Z}/k\mathbb{Z}\subseteq T^2$ action on $S^3$, which is a subaction of the usual torus action on $S^3\subset \C^2$, which maps this removed set to itself.  See Section \ref{Sec:Proofthm2} for more on this.
  
The next step is to construct the unique (up to orientation) $S^1$-bundle whose Chern class is invariant under this $\mathbb{Z}/k\mathbb{Z}\times \mathbb{Z}/k\mathbb{Z}$ action on $S^3$, and for which when restricted to each small $2$-sphere enclosing one of the removed points is the Hopf bundle. The resulting open four-manifold can be smoothly compactified by adding $2k$ points. The Riemannian metric $g_k$ is defined as (a small conformal perturbation of) the Gibbons-Hawking type metric \eqref{eq:h_k}, constructed using a Green's function with positive poles corresponding to the points removed from $F_0$ and negative poles corresponding to the points removed from $F_1$, see \eqref{eq:defu}. A key distinction from the classical Gibbons-Hawking construction is the behavior of the Green's function, which changes sign: it is positive near the fiber $F_0$ and negative near $F_1$.  This causes a great deal of delicacy and care in the region between the fibers.  We will show this $\mathbb{Z}/k\mathbb{Z}\times \mathbb{Z}/k\mathbb{Z}$ action on $S^3$ lifts to an (isometric) action by the Heisenberg group $H_3(\dZ/ k\dZ)$ on $M^4_k$, whose isotropy points are the $2k$ removed points.\\

Let us now discuss several viewpoints from which to understand the geometry of our $M^4_k$:

\subsection{$M^4_k$ as a Singular $S^1$ Bundle over $S^3$}\label{subsec:sinS1bundle}

The simply connected manifold $(M^4_k, g_k)$ admits an isometric $S^1$-action with $2k$ fixed points corresponding to the points that we removed from the Hopf fibers $F_0$ and $F_1$. The quotient space of the action is naturally homeomorphic to $S^3$, essentially by construction, and we shall denote $\pi : M_k^4 \to S^3$ to be the natural projection. Smooth $S^1$ actions with finitely many fixed points on closed simply connected $4$-manifolds are completely classified up to equivariant diffeomorphism. In particular, by \cite[Theorem 1.6]{ChurchLamotke}, we deduce that $M_k^4$ is diffeomorphic to the connected sum of $k-1$ copies of $S^2 \times S^2$. Hence, $M^4_k$ is spin and we can compute the homology as $H_1(M_k^4, \mathbb{Z}) = H_3(M_k^4, \mathbb{Z}) = 0$, and $H_2(M^4_k, \mathbb{Z}) \cong \mathbb{Z}^{2k-2}$.

\subsection{$M^4_k$ as a Singular $T^2$ Bundle Over $S^2$}
The map $\pi_{T^2} : M^4_k \to S^2$, obtained by composing $\pi : M_k^4 \to S^3$ and the Hopf map $\pi_{\rm Hopf} : S^3 \to S^2$, is a smooth torus fibration away from the preimages of the north and south poles in $S^2$. For every $x \in S^2$ different from the north and south pole, the fiber $\pi_{T^2}^{-1}(x)$ is diffeomorphic to a two-torus. On the other hand, the preimages of the south and north poles are homeomorphic to singular Kodaira fibers of type $I_k$. Topologically, the latter can be viewed as a chain of $k$ spheres $S_1^2, S_2^2, \dots, S_k^2$ with the property that  $S_i^2$ intersects $S_{i+1}^2$ at exactly one point, forming a loop (where $S_{k+1}^2$ is identified with $S_1^2$). 

The reader is referred to \cite{GrossWilson} and \cite{ChenViaclovskyZhang} for more on the appearance of singular Kodaira fibers through constructions based on the Gibbons-Hawking ansatz.

\subsection{$M^4_k$ as a Singular ${\rm Nil}_k^3$ Bundle Over $I$}
The map $\pi_{{\rm Nil}_k^3} : M^4_k \to [0,1]$, obtained by composing $\pi : M_k^4 \to S^3$ and  $S^3 \ni (z_1, z_2) \mapsto |z_1| \in [0,1]$, is a smooth fibration away from the preimages of $0$ and $1$. The (generic) fiber is diffeomorphic to the Heisenberg nilmanifold ${\rm Nil}_k^3$ of degree $k$.
The preimages of the boundary points are $I_k$ fibers. We refer the reader to \cite{HeinSunViaclovskyZhang} for further details on the emergence of fibrations with nilmanifold fibers via constructions based on the Gibbons-Hawking ansatz. Additionally, we point to the earlier works \cite{TianYau} and \cite{Hein} for the construction of Calabi-Yau metrics on manifolds that locally fiber as nilmanifolds over intervals.

The global action by the finite Heisenberg group $H_3(\mathbb{Z}/k\mathbb{Z})$ preserves the fibers of $\pi_{\text{Nil}_k^3}$. On the regular fibers this action matches the one described in Section \ref{sec:HeisenbergNil}. On the two singular $I_k$ fibers, the two generators of $H_3(\mathbb{Z}/k\mathbb{Z})$ act in the following ways:

\begin{enumerate}
	\item \textit{Cycling of the spheres}: The spheres $S_1^2, \dots, S_k^2$ are permuted cyclically, with the action $S_\ell^2 \to S_{\ell+1}^2$ for $\ell = 1, \dots, k-1$, and $S_k^2 \to S_1^2$.
	\item \textit{Asynchronous rotation}: The sphere $S_\ell^2$ is rotated by an angle $2\pi \ell / k$ (with respect to the axis connecting the touching points with the two neighbourhing $2$-spheres) for each $\ell = 1, \dots, k$.
\end{enumerate}

\subsection{Doubling of a Neighborhood of the $I_k$ Fiber}

Let $\Sigma \subset S^3$ be the Clifford torus. As we observe in the proof of Lemma \ref{lemma:invariance} below, the preimage $\pi^{-1}(\Sigma) \subset M_k^4$ is diffeomorphic to $\mathrm{Nil}_k^3$. It is not hard to see that the latter disconnects $M_k^4$ into two open regions $U_k^{\pm}$, each diffeomorphic to a neighborhood of an $I_k$ fiber.

Thus, $M_k^4$ can be obtained by gluing together these two neighbourhoods of the $I_k$ fiber along their boundaries $\partial U_k^{\pm}$, which are diffeomorphic to $\mathrm{Nil}_k^3$. Importantly, the gluing map is an equivariant diffeomorphism of $\mathrm{Nil}_k^3$ with respect to its $S^1$ and $H_3(\mathbb{Z}/k\mathbb{Z})$ symmetries. Moreover the gluing map switches the sign of the Chern class on the base two-torus. This is consistent with the fact that $M_k^4$ is simply connected, although the two neighbourhoods of the $I_k$ fibers deformation retract onto the singular fibers and, as such, have fundamental groups isomorphic to $\mathbb{Z}$. More in detail, we can apply Van Kampen's theorem to an open cover $M_k^4=V_k^+\cup V_k^{-}$ where $V_k^{\pm}\Supset U_k^{\pm}$ are tubular neighbourhoods. Thus $V_k^+\cap V_k^-$ is a tubular neighbourhood of the common boundaries $\partial U_k^{\pm}$ which deformation retracts onto $\partial U_k^{\pm}$. In particular, $V_k^+\cap V_k^-$ is homotopy equivalent to $\mathrm{Nil}_k^3$. With these choices, $\pi_1(V^{\pm}_k)\cong \pi_1(U_k^{\pm})\cong \pi_1(I_k)\cong\mathbb{Z}$. On the other hand, each generator of $\pi_1(V_k^+\cap V_k^-)\cong \pi_1(\mathrm{Nil}_k^3)$ is mapped to $0$ by at least one of the natural inclusions $(i_+)_*:\pi_1(V_k^+\cap V_k^-)\to \pi_1(V^+_k)$ and $(i_{-})_*:\pi_1(V_k^+\cap V_k^-)\to \pi_1(V^{-}_k)$. Thus we have
\begin{equation}
\pi_1(M^4_k)\cong \pi_1(V_k^+)\ast_{\pi_1(V_k^+\cap V_k^-)}\pi_1(V_k^-)\cong \{0\}\, .
\end{equation}
We also note that this point of view on the $M^4_k$'s can be exploited to compute their homologies and intersection forms without invoking the resuls from \cite{ChurchLamotke}. The intersection form could then be used to show that $M^4_k$ is homeomorphic to the connected sum of $(k-1)$ copies of $S^2\times S^2$. We avoid discussing further the details since \cite[Theorem 1.6]{ChurchLamotke} yields a stronger diffeomorphism statement.

\section{Proof of Theorem \ref{thm:HeisenberIntro}}\label{Sec:Proofthm2}

    The careful construction of $M^4_k$ begins with the three sphere $S^3$.  Let us view $S^3$ as the unit sphere in $\mathbb{R}^4=\C^2$. We consider the fibers $F_0,F_1\subset S^3$ of the Hopf fibration defined by
    \begin{equation}\label{eq:F_1F_2}
    	F_0 :=\{(z_1, 0): |z_1|=1\} \, , 
    	\quad
    	F_1 := \{(0, z_2): |z_2|=1\} \, .
    \end{equation}
    Let $k\in\mathbb{N}$ be fixed and $\mathcal{B}_k$ be the incomplete manifold obtained by removing the $k$-th roots of unity from the Hopf fibers $F_0$ and $F_1$. Namely, we set
    \begin{equation}
    	\mathcal{B}_k := S^3 \setminus \{(e^{2\pi i \ell/k},0), (0, e^{2\pi i \ell'/k})\, : \, 0\le \ell, \ell'\le k-1\}\, .
    \end{equation}
    Note that there is a natural action of $\mathbb{Z}/k \mathbb{Z} \times \mathbb{Z}/k  \mathbb{Z}$ on $\mathcal{B}_k$ given by

  \begin{equation}\label{eq:gamma_1gamma_2}
    	(\ell,\ell')\cdot(z_1,z_2) := (e^{2\pi i\ell/k}z_1,e^{2\pi i\ell'/k}z_2) \, .
    \end{equation}

The action is effective and isometric with respect to the restriction of the standard metric of $S^3$ to $\mathcal{B}_k$.

\medskip

We note that $H_2(\mathcal{B}_k, \mathbb{Z}) \simeq \mathbb{Z}^{2k-1}$ is generated by the homology classes of small $2$-spheres enclosing the points $p^0_\ell:=(e^{2\pi i \ell/k},0)$ and $p^1_{\ell'}:=(0, e^{2\pi i \ell'/k})$.  We enumerate them
\begin{align}
S^{\alpha}_\ell = \partial B_r(p^\alpha_\ell)\, ,
\end{align}
with $\alpha\in \{0,1\}$, $\ell\in\{0,\ldots,k-1\}$ and $r>0$ chosen sufficiently small so that the collection of balls $B_r(p^\alpha_\ell)$ is disjoint and each ball is diffeomorphic to a three ball.
\\

Let us now define the integral cohomology class $[\omega] \in H^2(\mathcal{B}_k, \mathbb{Z}) \simeq \mathbb{Z}^{2k-1}$ by the condition
\begin{align}\label{e:omega_def}
\int_{S^\alpha_\ell} \omega = (-1)^\alpha\, 2\pi\, ,	
\end{align}
where the orientation of each sphere $S_\ell^\alpha$ is induced by the standard orientation of $S^3\subset \R^4$.
Observe that $\omega$ in invariant under the $\mathbb{Z}/k \mathbb{Z} \times \mathbb{Z}/k  \mathbb{Z}$ action on $\cB_k$.

Let us now consider the unique principal $S^1$ bundle $\widetilde{\pi}:\mathcal{N}_k\stackrel{S^1}{\longrightarrow} \mathcal{B}_k$ whose Chern class is $[\omega]$. Using \eqref{e:omega_def}, we have that $\widetilde{\pi}^{-1}(B_r(p^{\alpha}_{\ell})\setminus\{p^{\alpha}_{\ell}\})$ is diffeomorphic to a punctured $4$-ball for each $p^{\alpha}_{\ell}$.  Hence $\mathcal{N}_k$ can be ``completed'' 
\begin{align}
M^4_k \equiv \cN_k\cup \{\tilde p^\alpha_\ell\}\, ,	
\end{align}
to a closed $4$-manifold $M^4_k$ by adding $2k$ points. Moreover, this completion gives rise to a smooth extension 
\begin{align}
	\pi:M^4_k\to S^3\, ,
\end{align}
of $\widetilde{\pi}:\mathcal{N}_k\to \mathcal{B}_k$. The $S^1$ action on $\mathcal{N}_k$ extends smoothly to $M_k$ with the added points as fixed points. See Section \ref{ss:lemma_smoothness_proof} for coordinate computations and also \cite{AndersonKronheimerLebrun,LeBrun}, \cite[Section 2]{Gross} for analogous arguments.
\medskip

Note that $[\omega]$ is a primitive class. Since $ \mathcal{B}_k$ is simply connected we have that $\mathcal{N}_k$ is simply connected, and hence $M_k$ is simply connected as well.

\medskip

We are going to define a metric on $M^4_k$ through a variant of the so-called Gibbons-Hawking ansatz, originally introduced in \cite{GibbonsHawking}.\\

Let $u:S^3\to [-\infty, +\infty]$ be a solution of
\begin{equation}\label{eq:defu}
	- \Delta_{S^3} u = 2\pi \sum_{\ell = 0}^{k-1} \left(\delta_{(e^{2\pi i \ell/k},0)} - \delta_{(0,e^{2\pi i \ell/k})}\right)= 2\pi \sum_{\ell = 0}^{k-1} \left(\delta_{p^0_\ell} - \delta_{p^1_\ell}\right)\, ,
\end{equation}
where $S^3$ is endowed with the round metric of radius one. As the right hand side integrates to zero we have that such a function $u$ exists, is smooth away from $\{p^\alpha_\ell\}$, and is unique up to a constant.  We normalize it in such a way that it is odd under the involution $\iota:S^3\to S^3$, $\iota(z_1,z_2):=(z_2,z_1)$.  That is we want the condition $u(z_2,z_1)=-u(z_1,z_2)$.  
Notice that $u: \mathcal{B}_k \to \R$ is smooth and harmonic. We can define the two-form
\begin{equation}\label{eq:curv2form}
	\omega = \ast du  \, ,
\end{equation}
where $\ast$ denotes the Hodge $\ast$ operator on $S^3$. Note that $d\omega=0$ on $\cB_k$ follows because $u$ is harmonic on $\cB_k$, and hence $\omega$ defines a 2-cohomology class on $\cB_k$.

\begin{lemma}\label{lemma:rightrep}
The cohomology class $[\omega]$ satisfies \eqref{e:omega_def}, and thus represents the Chern class of the principal $S^1$ bundle $\widetilde{\pi}:\mathcal{N}_k\to \mathcal{B}_k $ in de Rham cohomology.
\end{lemma}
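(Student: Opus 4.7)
The plan is to verify directly that the closed 2-form $\omega = *du$ has period $\int_{S^\alpha_\ell}\omega = (-1)^\alpha\cdot 2\pi$ over each small sphere $S^\alpha_\ell$. Since the Chern class of $\widetilde\pi$ is defined in \eqref{e:omega_def} by exactly these period conditions and $H^2(\cB_k,\dZ)\cong\dZ^{2k-1}$ is torsion-free, this matching of periods will prove that the two classes agree. That $\omega$ is closed on $\cB_k$ is immediate from the harmonicity of $u$: the identity $d(*du) = -\Delta u\cdot\mathrm{vol}_{S^3}$ together with \eqref{eq:defu} shows $d\omega = 0$ on $\cB_k$.

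For the period, I would fix a puncture $p^\alpha_\ell$ and a radius $r > 0$ small enough that $\overline{B_r(p^\alpha_\ell)}\subset S^3$ contains no other puncture. Because $\omega$ is smooth and closed on the punctured ball $B_r(p^\alpha_\ell)\setminus\{p^\alpha_\ell\}$, Stokes' theorem applied to the annulus $B_r(p^\alpha_\ell)\setminus\overline{B_\epsilon(p^\alpha_\ell)}$ yields
\begin{equation*}
	\int_{S^\alpha_\ell}\omega \;=\; \int_{\partial B_\epsilon(p^\alpha_\ell)}\omega \qquad\text{for every }0<\epsilon<r,
\end{equation*}
so the required period equals $\lim_{\epsilon\to 0}\int_{\partial B_\epsilon(p^\alpha_\ell)}*du$.

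I would then compute this limit via the standard identification $\int_\Sigma *du = \int_\Sigma \langle \nabla u,\nu\rangle\,dA$ (with $\nu$ the outward unit normal), which reduces the period to the flux of $\nabla u$ through an infinitesimal sphere. From \eqref{eq:defu}, $u$ has the local asymptotic $u(x) = (-1)^\alpha\cdot\frac{1}{2\,d_{S^3}(x,p^\alpha_\ell)} + O(1)$, coming from the Green's function for $-\Delta_{S^3}$ with source strength $2\pi\cdot(-1)^\alpha$. The flux of the gradient of this model singularity through a geodesic sphere of radius $\epsilon$ tends to $(-1)^\alpha\cdot 2\pi$ as $\epsilon\to 0$ by a direct Euclidean-in-normal-coordinates computation, while the bounded remainder contributes $O(\epsilon)$ and vanishes. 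Equivalently, viewed as a 2-current on all of $S^3$, one has $d\omega = -\Delta u\cdot\mathrm{vol}_{S^3} = 2\pi\sum_\ell(\delta_{p^0_\ell}-\delta_{p^1_\ell})$, and evaluating this current against $B_r(p^\alpha_\ell)$ gives the same $(-1)^\alpha\cdot 2\pi$. The only real delicacy is keeping the sign conventions for the Hodge star, the Laplacian, and the orientation of $\partial B_\epsilon(p^\alpha_\ell)$ mutually consistent with the signs in \eqref{eq:defu} and \eqref{e:omega_def}; once that bookkeeping is fixed at the start, the argument is routine and fully localized at each puncture, so no global information on $\cB_k$ beyond the closedness of $\omega$ is needed.
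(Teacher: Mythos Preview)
Your proposal is correct and follows essentially the same approach as the paper: both arguments compute the period $\int_{S^\alpha_\ell}\omega$ by Stokes' theorem together with the distributional identity $d\omega = d(\ast du) = \ast\Delta u$, then read off $(-1)^\alpha\cdot 2\pi$ from the delta sources in \eqref{eq:defu}. The paper is terser---it applies the distributional Stokes in one line---while you spell out the equivalent flux-through-shrinking-spheres computation and the local Green's function asymptotics; the paper also records the $\mathbb{Z}/k\mathbb{Z}\times\mathbb{Z}/k\mathbb{Z}$-invariance of $\omega$, which is not needed for the lemma itself but is used later.
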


\begin{proof}
Clearly, $u$ is invariant under the $\mathbb{Z}/k\mathbb{Z} \times \mathbb{Z}/k\mathbb{Z}$-action introduced in \eqref{eq:gamma_1gamma_2}. Hence, $\omega = \ast du$ is also invariant with respect to this action.

Let $B_r(p^\alpha_\ell)$ be a small three-ball centered at one of the removed points $p^\alpha_\ell$ with $S^\alpha_\ell=\partial B^3_r(p^\alpha_\ell)$.  Observe that
\begin{equation}\label{eq:domega}
	d \omega = d \ast d u = \ast (\ast d \ast d u) = \ast \Delta u\, .
\end{equation}
Hence, by (a distributional formulation of) Stokes' theorem, we get
\begin{equation}
	\int_{S^\alpha_\ell} \omega 
	= \int_{B_r(p^\alpha_\ell)} d \omega
	= \int_{B_r(p^\alpha_\ell)} \Delta u
	= (-1)^\alpha\,2\pi \, ,
\end{equation}
as claimed.\\
\end{proof}

Thanks to Lemma \ref{lemma:rightrep}, there exists a connection on the principal $S^1$-bundle $\widetilde{\pi}: \mathcal{N}_k \to \mathcal{B}_k$ with curvature form $\omega$. Let $\theta \in \Omega^1(\mathcal{N}_k)$ be a principal connection $1$-form corresponding to this connection. In particular, $d\theta = \widetilde{\pi}^*\omega$. While such a connection $1$-form is not unique, its uniqueness is ensured up to gauge equivalence as the base space is simply connected.

\medskip

Let us now consider the function
\begin{equation}\label{eq:V}
	V(x) : = \frac{1}{4\pi} \log\left(e^{4\pi x} + e^{-4\pi x} + 2\right) \, .
\end{equation}
We introduce the Riemannian metric $h_{k}$ on $\mathcal{N}_k$ by defining
\begin{equation}\label{eq:h_k}
h_{k} := 
V_\Lambda(u_k)\, \widetilde{\pi}^*g_{S^3}
+\frac{1}{k^2\Lambda^2\, V_\Lambda\left(u_k\right)}\theta\otimes\theta\, ,
\end{equation}
where 
\begin{align}
	u_k&:=u/k\, ,\notag\\
	V_\Lambda(x) &:= \Lambda^{-1}V(x) + 1\, ,
\end{align}
with $\Lambda>1$ to be chosen later to be sufficiently large. The introduction of the parameter $\Lambda$ is somewhat motivated by the expression for the Taub-NUT metrics via the Gibbons-Hawking ansatz.

Equivalently, $h_{k}$ is the unique principal $S^1$ bundle metric on the total space of $\widetilde{\pi}:\mathcal{N}_k\to \mathcal{B}_k$ such that: 
\begin{enumerate}
	\item The connection $1$-form is $\theta$ with $d\theta=\widetilde{\pi}^*\omega$;  This connection is smooth away from $\{p^\alpha_\ell\}$ but is singular in the natural $S^3$ coordinates near $\{p^\alpha_\ell\}$.
	\item The induced metric on the base space $\mathcal{B}_k$ is $V_\Lambda(u_k)\, g_{S^3}$;  For large $\Lambda$ the geometry of $\cB_k$ is close to $S^3$ away from $\{p^\alpha_\ell\}$, with singularities near  $\{p^\alpha_\ell\}$.
	\item The $S^1$ fibers have length $(k\Lambda \sqrt{V_\Lambda(u_k)})^{-1}$; For large $\Lambda$ the fibers are $\approx \frac{1}{k\Lambda}$ small away from $\{p^\alpha_\ell\}$, with fiber length tending to zero at $\{p^\alpha_\ell\}$. 
\end{enumerate}

\begin{remark}[Choice of $V$]
	The specific choice of $V$ in \eqref{eq:V} does not play a central role in the sequel; it is merely convenient for calculations. In principle, any smooth convex function $V: \mathbb{R} \to (0, +\infty)$, independent of $k$, with the correct asymptotic behavior $V(x) \sim |x|$ as $x \to \pm \infty$, would work.
\end{remark}

Despite the obvious singularities of the metric components in the $S^3$ coordinates, we claim that $h_k$ extends to a smooth Riemannian metric on $M^4_k$.  This Riemannian metric is invariant under an effective isometric action of the finite Heisenberg group $H_3(\mathbb{Z}/k\mathbb{Z})$. Moreover, the manifolds $(M^4_k,h_k)$ have uniformly bounded diameters, and $0 \le {\rm Ric}_{h_k} \le 3$ for every $k\in\N$. We will prove these claims, corresponding to Lemma \ref{lemma:smoothness}, Lemma \ref{lemma:invariance}, and Lemma \ref{lemma:Ricci} below respectively, in the forthcoming subsections. 

\begin{lemma}\label{lemma:smoothness}
The metric $h_k$ on $\mathcal{N}_k$ extends smoothly to a Riemannian metric, still denoted $h_k$, on $M^4_k$. 
\end{lemma}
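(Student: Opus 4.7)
Away from the $2k$ added points $\{\tilde p^{\alpha}_{\ell}\}$, the metric $h_k$ is smooth by inspection: on $\mathcal{N}_k$ the function $u$ is smooth, $V_\Lambda(u_k) > 0$, and the principal connection $\theta$ is smooth, so both summands in \eqref{eq:h_k} are smooth tensors. Smooth extension across each added point is therefore the only issue. By the transitive action of $\mathbb{Z}/k\mathbb{Z} \times \mathbb{Z}/k\mathbb{Z}$ on each of the two families $\{p^0_\ell\}$ and $\{p^1_{\ell'}\}$, together with the involution $\iota$ and the normalization $u \circ \iota = -u$, it suffices to check smoothness at the single point $\tilde p^0_0$ lying over $p^0_0 = (1,0) \in F_0$. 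The plan is to exhibit local coordinates identifying a neighborhood of $\tilde p^0_0$ with a neighborhood of the origin in $\mathbb{R}^4 = \mathbb{C}^2$, in which $h_k$ is the flat Euclidean metric plus a smooth perturbation. This is the standard Gibbons-Hawking nut-point analysis, parallel to \cite{AndersonKronheimerLebrun, LeBrun} and \cite[Section 2]{Gross}.

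\textbf{Asymptotics of the potential.} Applying standard Green's function theory for $-\Delta_{S^3}$ to \eqref{eq:defu} gives, in a neighborhood of $p^0_0$,
\begin{equation*}
u(x) = \frac{1}{2\, r(x)} + u_{\mathrm{reg}}(x), \qquad r(x) := d_{S^3}(x, p^0_0),
\end{equation*}
with $u_{\mathrm{reg}}$ smooth. Since \eqref{eq:V} gives $V(s) = s + O(e^{-4\pi s})$ as $s \to +\infty$, one obtains
\begin{equation*}
V_\Lambda(u_k)(x) = \frac{1}{2k\Lambda\, r(x)} + W(x), \qquad W \text{ smooth near } p^0_0.
\end{equation*}

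\textbf{Coordinate change.} Take geodesic polar coordinates $(r, \xi) \in [0,\varepsilon) \times S^2$ on $S^3$ centered at $p^0_0$, so $g_{S^3} = dr^2 + \sin^2(r)\, g_{S^2}$. By \eqref{e:omega_def} we have $\int_{S^0_0} \omega = +2\pi$, so the $S^1$-bundle $\widetilde\pi$ restricts on small linking $2$-spheres to the Hopf bundle; fix a local gauge in which $\theta$ coincides to leading order with the standard Hopf connection $\eta$ on $S^3 \subset \mathbb{R}^4$. Introduce the new radial variable
\begin{equation*}
\rho := \sqrt{\tfrac{2r}{k\Lambda}},
\end{equation*}
and take $(\rho, \xi, e^{i\phi}) \in [0,\delta) \times S^2 \times S^1$ as local coordinates on $M_k^4$ around $\tilde p^0_0$, identifying the chart with a neighborhood of the origin in $\mathbb{R}^4$ via the standard Hopf polar parametrization. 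Substituting the asymptotics of the previous step into \eqref{eq:h_k} yields
\begin{equation*}
h_k = \big( d\rho^2 + \tfrac{\rho^2}{4}\, g_{S^2} + \rho^2\, \eta^2 \big) + E,
\end{equation*}
where the first piece is the flat Euclidean metric on $\mathbb{R}^4$ in Hopf polar form, and $E$ collects the higher-order corrections coming from $W$, from $\sin^2 r - r^2$, and from the gauge remainder $\theta - \eta$.

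\textbf{Main obstacle.} The main subtlety is to show that $E$ is a smooth symmetric $2$-tensor at $\rho = 0$ in the ambient $\mathbb{R}^4$ chart: the individual summands $d\rho^2$ and $\rho^2 \eta^2$ are each singular at the origin, and only particular combinations (such as the flat metric itself) are smooth. The key observation is that $W$, as a smooth function on $S^3$, pulls back via the smooth map $\pi : M_k^4 \to S^3$ (which near $\tilde p^0_0$ is essentially the quadratic Hopf submersion $\mathbb{C}^2 \to \mathbb{R}^3$) to a smooth function on $\mathbb{R}^4$; the remaining corrections from $\sin^2 r$ and from $\theta - \eta$ enter multiplied by sufficiently high powers of $\rho^2$ that the resulting tensors extend smoothly across the origin. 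The analysis at the $F_1$-poles (where $u \to -\infty$ and the Chern number \eqref{e:omega_def} is $-1$) is identical after replacing $\eta$ by its conjugate, which is precisely how $\iota$ with $u \circ \iota = -u$ transports the argument.
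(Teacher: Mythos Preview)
Your overall strategy is exactly the paper's: isolate the leading singularity of the potential near a pole, write the connection as Hopf plus a correction, perform the radial substitution $\rho^2 \propto r$, and recognize the leading piece as the flat Euclidean metric in Hopf--polar form.

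There is, however, a genuine technical slip. You assert that $u_{\mathrm{reg}} := u - \tfrac{1}{2r}$, and hence $W$, is \emph{smooth on $S^3$} near $p^0_0$, and your smoothness argument for $E$ rests on this via ``a smooth function on $S^3$ pulls back along the smooth map $\pi$ to a smooth function on $\mathbb{R}^4$''. But the claim is false: the exact radial harmonic function on $S^3$ is $\tfrac{1}{2}\cot r$, not $\tfrac{1}{2r}$, and the discrepancy
\[
\tfrac{1}{2}\bigl(\cot r - \tfrac{1}{r}\bigr) \;=\; -\tfrac{r}{6} + O(r^3)
\]
is an odd function of the geodesic distance, hence \emph{not} smooth across the center of the ball. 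Equivalently, $-\Delta_{S^3}\bigl(\tfrac{1}{2r}\bigr) = 2\pi\delta_{p^0_0} - \tfrac{1}{3r} + O(r)$ carries a residual $1/r$ singularity, so the remainder after subtracting only the Euclidean pole solves an elliptic equation with singular right-hand side and cannot be $C^\infty$. Your justification for the smoothness of $E$ therefore does not go through as written.

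The paper sidesteps this by subtracting the genuine $S^3$ singularity: it writes $u = (-1)^\alpha \tfrac{1}{2}\cot s + u'$ with $u'$ harmonic and hence smooth on the ball, so that $kV(u/k)+k\Lambda = \tfrac{1}{2}\cot s + u''$ with $u'' \in C^\infty$; the substitution $s_{\mathrm{base}} = s_{\mathbb{R}^4}^2/2$ then turns the singular piece together with the Hopf term directly into $\tfrac{1}{k\Lambda}\,g_{\mathbb{R}^4}$ plus a remainder that is manifestly the pullback of a smooth tensor. Your argument is easily repaired either by adopting this $\cot r$ decomposition, or by noting separately that the non-smooth part of your $W$ is purely radial and that $r\circ\pi = \tfrac{k\Lambda}{2}\rho^2$ is smooth on $\mathbb{R}^4$, so any one-variable $C^\infty$ function of $r$ on $[0,\varepsilon)$ pulls back smoothly even though it is not $C^\infty$ on $S^3$.
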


\begin{lemma}\label{lemma:invariance}
The finite Heisenberg group $H_3(\mathbb{Z}/k\mathbb{Z})$ acts effectively and isometrically on $(M^4_k, h_k)$. 
\end{lemma}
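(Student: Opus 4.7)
My plan is to lift the $\mathbb{Z}/k\mathbb{Z}\times\mathbb{Z}/k\mathbb{Z}$ action on $\cB_k$ to a connection-preserving, hence isometric, action on the total space $(\cN_k, h_k)$, and then recognise the resulting central extension as $H_3(\mathbb{Z}/k\mathbb{Z})$ after a suitable normalisation of the lifts. Let $G := \mathbb{Z}/k\mathbb{Z}\times \mathbb{Z}/k\mathbb{Z}$. The first step is to check that all the ingredients defining $h_k$ are $G$-invariant. For each $g \in G$, $u\circ g - u$ is harmonic on all of $S^3$ (since $g$ is a round isometry and permutes the pole set), hence constant, and the assignment $g\mapsto c_g$ is a group homomorphism from the finite $G$ into $\mathbb{R}$, so identically zero. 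Consequently $u$, $\omega = \ast du$, $V_\Lambda(u_k)$ and the base metric $V_\Lambda(u_k)\, g_{S^3}$ are all $G$-invariant.

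Next I would lift each $g \in G$ to a connection-preserving bundle automorphism of $\cN_k$. Since $\cB_k = S^3\setminus\{p^\alpha_\ell\}$ is simply connected and $\omega$ is $G$-invariant, any bundle lift $\widetilde g$ of $g$ satisfies $\widetilde g^*\theta - \theta = \widetilde\pi^* df$ for some $f:\cB_k\to\mathbb{R}$; composing with the gauge transformation by $e^{-if}$ yields a connection-preserving lift, unique up to a global constant in the central $S^1$-fibre action. By construction such a lift is an isometry of $h_k$, because $h_k$ is built from $V_\Lambda(u_k)\widetilde\pi^* g_{S^3}$ (manifestly invariant) and $\theta\otimes\theta$ (invariant by connection-preservation).

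The main step is to identify the commutator. Fix connection-preserving lifts $\widetilde X,\widetilde Y$ of the generators $X,Y\in G$. Their commutator $[\widetilde X,\widetilde Y]$ covers the identity on $\cB_k$ and preserves $\theta$, so it is a constant rotation $e^{i\alpha}$ along the $S^1$-fibres. The cocycle formula from equivariant prequantisation identifies $\alpha$ (up to sign) with the holonomy of $\theta$ around the parallelogram loop $\gamma_X\cdot(X\gamma_Y)\cdot(XY\gamma_X)^{-1}\cdot(Y\gamma_Y)^{-1}$, which equals $\pm\int_P\omega$ for an enclosed $2$-cell $P$. I would place $P$ on the $G$-invariant Clifford torus $\Sigma = \{|z_1|=|z_2|=1/\sqrt{2}\}$ as a fundamental domain for the $G$-action; then $G$-invariance of $\omega|_\Sigma$ gives $\int_P\omega = k^{-2}\int_\Sigma \omega$. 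Applying Stokes' theorem to the solid torus $\{|z_1|\ge |z_2|\}\subset S^3$ with the small balls $B_r(p^0_\ell)$ excised, together with the normalisation $\int_{S^0_\ell}\omega = 2\pi$ from \eqref{e:omega_def}, evaluates $\int_\Sigma\omega = 2\pi k$. Hence $[\widetilde X,\widetilde Y] = e^{\pm 2\pi i/k}$, which generates a cyclic subgroup $\mathbb{Z}/k\mathbb{Z}\le S^1$.

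To finish, I would normalise $\widetilde X,\widetilde Y$ by multiplicative constants in $S^1$ so that $\widetilde X^k=\widetilde Y^k = 1$; setting $\widetilde Z := [\widetilde X,\widetilde Y]$, the elements $\widetilde X,\widetilde Y,\widetilde Z$ satisfy the defining relations of $H_3(\mathbb{Z}/k\mathbb{Z})$ and generate a copy of this group inside $\mathrm{Isom}(\cN_k,h_k)$. This action extends to $M^4_k$ by sending $\widetilde X:\tilde p^0_\ell\mapsto \tilde p^0_{\ell+1}$ and fixing $\tilde p^1_\ell$ (and symmetrically for $\widetilde Y$), consistent with the base $G$-action on $\{p^\alpha_\ell\}$; the centre $\langle\widetilde Z\rangle$ acts through the smooth global $S^1$-action on $M^4_k$ provided by Lemma~\ref{lemma:smoothness}. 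Smoothness of the extension at the added points then follows from the Myers--Steenrod theorem applied to the smooth compact Riemannian manifold $(M^4_k,h_k)$. Effectiveness is immediate: the centre acts by nontrivial fibre rotations, while modulo the centre one recovers the effective $G$-action on $S^3$. The principal obstacle is the commutator computation in the third step: the cleanest route requires the cocycle identification between $[\widetilde X,\widetilde Y]$ and the curvature integral, together with careful orientation bookkeeping through Stokes' theorem—though fortunately only the order of $[\widetilde X,\widetilde Y]$ in $S^1$, and not its precise sign, is needed.
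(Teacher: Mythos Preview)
Your proposal is correct and follows essentially the same approach as the paper: lift the $\mathbb{Z}/k\mathbb{Z}\times\mathbb{Z}/k\mathbb{Z}$ action to connection-preserving bundle automorphisms (the paper's Claims~1 and~2), then identify the Heisenberg structure via the curvature integral $\int_\Sigma \omega = 2\pi k$ over the Clifford torus, and finally extend by metric completion/Myers--Steenrod. The only difference is packaging: the paper restricts to $\widetilde\pi^{-1}(\Sigma)\cong \mathrm{Nil}_k^3$ and invokes the explicit $H_3(\mathbb{Z}/k\mathbb{Z})$-action from Section~\ref{sec:HeisenbergNil}, whereas you compute the commutator $[\widetilde X,\widetilde Y]=e^{\pm 2\pi i/k}$ directly via the holonomy/cocycle formula over a fundamental parallelogram---these are two presentations of the same computation.
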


\begin{lemma}\label{lemma:Ricci}
	The manifold $(M^4_k, h_k)$ satisfies
\begin{enumerate}
	\item $\mathrm{diam}(M_k) \leq \pi+O(k^{-1})$\, ,
	\item  $0 \leq \mathrm{Ric}_{h_k} \leq 2+O(k^{-1})$\, .
\end{enumerate}
Moreover, we have that $\mathrm{Ric}_{h_k} > 0$ in $ \mathcal{N}_k \subset M_k$.  That is, the Ricci tensor is zero only at the $2k$ points $\{\tilde p^\alpha_\ell\}$ added to $\mathcal{N}_k$ to form $M^4_k$.
\end{lemma}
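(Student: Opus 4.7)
The strategy is to compute $\mathrm{Ric}_{h_k}$ directly using the Riemannian submersion structure $\widetilde{\pi}\colon (\mathcal{N}_k, h_k)\to (\mathcal{B}_k, F\,g_{S^3})$, writing $F := V_\Lambda(u_k)$ for brevity and setting $\bar\theta := (k\Lambda)^{-1}\theta$, so that $d\bar\theta = \widetilde{\pi}^*\bar\omega$ with $\bar\omega = \Lambda^{-1}\ast_{g_{S^3}}du_k$ and the metric takes the compact form $h_k = F\,\widetilde{\pi}^*g_{S^3} + F^{-1}\,\bar\theta\otimes\bar\theta$. Working in the adapted orthonormal frame $\{E_0, E_1, E_2, E_3\}$, with $E_i = F^{-1/2}\tilde e_i$ horizontal lifts of an orthonormal frame on $S^3$ and $E_0$ the unit vertical vector, Cartan's structure equations (or equivalently O'Neill's formulas for principal $S^1$-bundles) will express the Ricci tensor as a sum of four pieces: (i) a base contribution from $\mathrm{Ric}_{g_{S^3}}=2\,g_{S^3}$ acting on the horizontal distribution; (ii) Hessian and gradient terms in $F$ arising from the conformal rescaling $F\,g_{S^3}$ of the base; (iii) bundle curvature terms controlled by $|\bar\omega|^2$; and (iv) mixed horizontal-vertical terms proportional to the codifferential of $\bar\omega$, which vanish identically on $\mathcal{B}_k$ since $u_k$ is harmonic there.

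The algebraic identities tying these pieces together will be $\ast_{g_{S^3}}dF = V'(u_k)\,\bar\omega$ and $\Delta_{g_{S^3}}F = \Lambda^{-1}V''(u_k)\,|du_k|^2_{g_{S^3}}$. A direct computation from \eqref{eq:V} gives $V'(x) = \tanh(2\pi x) \in (-1,1)$ and $V''(x) = 2\pi\operatorname{sech}^2(2\pi x) > 0$, so $F$ is subharmonic ($\Delta F\geq 0$) and $|V'|<1$. This convexity of $V$ is decisive: it forces the curvature correction terms to carry the correct signs. With these identities every correction term can be rewritten purely in terms of $u_k$ and $du_k$, weighted by uniformly bounded functions of $u_k$.

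For the upper bound the dominant horizontal Ricci piece is $\mathrm{Ric}_{g_{S^3}}/F = 2/F \leq 2$ (since $F \geq 1$), while the remaining contributions vanish as $k\to\infty$ with $\Lambda$ fixed, giving $\mathrm{Ric}_{h_k}\leq 2 + O(k^{-1})$. For the lower bound $\mathrm{Ric}_{h_k}\geq 0$ one verifies sign by sign that the base contribution is strictly positive on the horizontal distribution, that the $|\bar\omega|^2$ and $\Delta F$ terms contribute nonnegatively in the vertical direction, and that the Hessian/gradient cross-terms recombine via $\ast dF = V'(u_k)\bar\omega$ with $|V'(u_k)|<1$ to preserve $\mathrm{Ric}_{h_k}\geq 0$. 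Strict positivity throughout $\mathcal{N}_k$ will follow because the horizontal Ricci is bounded below by a positive constant inherited from $\mathrm{Ric}_{g_{S^3}} = 2 g_{S^3}$, which dominates the corrections; vanishing of $\mathrm{Ric}_{h_k}(v, v)$ is only possible at the $2k$ added fixed points $\{\tilde p_\ell^\alpha\}$, where the principal $S^1$-action degenerates and the metric asymptotically resembles a flat Euclidean cap.

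For the diameter bound, I will split $M_k^4$ into the ``bulk'' region where $|u_k|$ is bounded, in which $F = 1 + O(\Lambda^{-1})$, the horizontal metric $F\,g_{S^3}$ has diameter at most $\pi + O(\Lambda^{-1})$, and the fibers have length $O((k\Lambda)^{-1})$; and the $2k$ small ``Taub-NUT caps'' around the removed points, of diameter $O((k\Lambda)^{-1})$ by the Gibbons-Hawking smoothing established in Lemma \ref{lemma:smoothness}. For $\Lambda$ chosen a fixed large constant this yields $\mathrm{diam}(M_k^4) \leq \pi + O(k^{-1})$. The main obstacle is the bookkeeping of signs in the Ricci computation: unlike the classical Ricci-flat Gibbons-Hawking setup (flat base, harmonic $F$, $\bar\omega = \ast dF$), here we have a positively curved base, $F$ a nonlinear function of the harmonic $u_k$, and the mismatch $\ast dF = V'(u_k)\bar\omega$ rather than $\ast dF = \bar\omega$; showing that the resulting cross-terms preserve nonnegativity relies crucially on the convexity of $V$ and on $|V'|<1$, and can only be verified by the explicit case-by-case computation.
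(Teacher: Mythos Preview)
Your overall framework matches the paper's: compute $\mathrm{Ric}_{h_k}$ via the O'Neill formulas for a principal $S^1$-bundle, express everything in terms of $u_k$, $|du_k|$, $V'(u_k)$, $V''(u_k)$, and exploit $|V'|<1$ together with $V''>0$ to control signs. The paper derives closed formulas for $\mathrm{Ric}(U,U)$, $\mathrm{Ric}(U,X)=0$, $\mathrm{Ric}(X,X)$ equivalent to what you anticipate, and the positivity mechanism is the one you describe.

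There is, however, a genuine gap. You assert that the correction terms ``vanish as $k\to\infty$ with $\Lambda$ fixed''; in fact they diverge. Near a pole one has $u_k\sim \pm\tfrac{1}{2ks}$ and $|du_k|\sim \tfrac{1}{2ks^2}$ (with $s$ the $S^3$-distance to the pole), so setting $\tau:=\pi/(ks)$ gives
\[
|du_k|^2\,V''(u_k)\;\sim\;\frac{k^2}{2\pi^3}\,\tau^4\,\mathrm{sech}^2(\tau),
\]
whose maximum over $\tau>0$ is of order $k^2$. At that scale $u_k=O(1)$, so $V(u_k)+\Lambda\sim\Lambda$, and the correction terms in both $\mathrm{Ric}(U,U)$ and $\mathrm{Ric}(X,X)$ are of order $k^2/\Lambda$. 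With $\Lambda$ fixed this destroys the upper bound and, worse, the horizontal lower bound $\mathrm{Ric}(X,X)\ge 0$. The paper's remedy is to take $\Lambda=\Lambda(k)\to\infty$; that, not $k\to\infty$ alone, is what produces the $O(k^{-1})$.

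The same oversight affects your diameter sketch. The split into a bounded-$|u_k|$ bulk plus $2k$ Taub--NUT caps is too coarse: along $F_0\cup F_1$ \emph{between} the poles one computes $u_k\sim\tfrac{1}{2\pi}\log k$, so for any fixed threshold the set $\{|u_k|>C\}$ eventually engulfs full tubes around $F_0\cup F_1$, not just small balls around the $2k$ points. The paper again lets $\Lambda\to\infty$ and, crucially, connects points in these tubes to the bulk via the explicit curves $\gamma_{z_1}(t)=(\sqrt{1-t^2}\,z_1,\,t^2)$, whose $g_{S^3}$-speed $|\gamma'|\sim t$ cancels the $1/t$ singularity of $\sqrt{F}$ and yields controllably small length; your cap estimate $O((k\Lambda)^{-1})$ accounts only for the immediate vicinity of each pole.
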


Once the proofs of Lemma \ref{lemma:smoothness}, Lemma \ref{lemma:invariance}, and Lemma \ref{lemma:Ricci} are completed, to conclude the proof of Theorem \ref{thm:HeisenberIntro} we will perturb the geometry of $(M^4_k, h_k)$ with a smooth conformal factor $g_k := e^{-2 \phi_k} h_k$.   This will allow us to achieve ${\rm Ric}_{g_k} > 0$ while maintaining the Ricci and diameter upper bounds, as well as the isometric action of the Heisenberg group $H_3(\mathbb{Z}/k\mathbb{Z})$.

\begin{lemma}\label{lemma:Ricci2}
	There exists $\phi_k \in C^\infty(M^4_k)$ such that $|\phi_k| \leq O(k^{-1})$ and $\phi_k$ is invariant under the action of $H_3(\mathbb{Z}/k\mathbb{Z})$. Moreover, the metric $g_k := e^{- 2 \phi_k} h_k$ satisfies $0 < {\rm Ric}_{g_k} \leq 2+O(k^{-1})$.
\end{lemma}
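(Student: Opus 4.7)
The approach is a standard conformal perturbation. Recall that in dimension four the conformal change $g=e^{-2\phi}h$ transforms the Ricci tensor as
$$\mathrm{Ric}_{e^{-2\phi}h} \;=\; \mathrm{Ric}_h \,+\, 2\nabla^2_h\phi \,+\, 2\, d\phi\otimes d\phi \,+\, \bigl(\Delta_h\phi - 2|\nabla_h\phi|^2\bigr)h \, .$$
Thus it suffices to construct an $H_3(\mathbb{Z}/k\mathbb{Z})$-invariant $\phi_k$ with $|\phi_k|=O(k^{-1})$, and with $\nabla^2_{h_k}\phi_k$ strictly positive definite at each of the $2k$ fixed points $\tilde p^\alpha_\ell$ where Lemma~\ref{lemma:Ricci} says $\mathrm{Ric}_{h_k}$ vanishes. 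Since $\mathrm{Ric}_{h_k}>0$ is already strict on $\mathcal{N}_k$, any sufficiently small, localized perturbation near the fixed points will preserve strict positivity of Ricci, while the Ricci upper bound $\mathrm{Ric}_{g_k}\leq 2+O(k^{-1})$ will be automatic from the size of $\phi_k$ in $C^2(h_k)$.

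For the construction, fix $\delta_k>0$ smaller than half the minimum $h_k$-distance between distinct fixed points, so that the balls $B_{\delta_k}(\tilde p^\alpha_\ell)$ are pairwise disjoint and contained in smooth $h_k$-normal charts. Let $\chi:[0,\infty)\to[0,1]$ be a fixed smooth, nondecreasing cutoff with $\chi(t)=t$ on $[0,1/4]$ and $\chi(t)\equiv 1$ on $[1,\infty)$, and set $\rho(x):=\mathrm{dist}_{h_k}(x,\{\tilde p^\alpha_\ell\})$, which is smooth through its square inside each ball $B_{\delta_k}(\tilde p^\alpha_\ell)$ and constant equal to~$\delta_k$ outside. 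Define
$$\phi_k(x) \;:=\; \eps_k\, \chi\!\left(\frac{\rho(x)^2}{\delta_k^2}\right) \, ,$$
where $\eps_k>0$ is to be chosen. The fixed-point set is $H_3(\mathbb{Z}/k\mathbb{Z})$-invariant (Lemma~\ref{lemma:invariance}), hence so is $\phi_k$. Outside $\bigcup B_{\delta_k}(\tilde p^\alpha_\ell)$, $\phi_k\equiv \eps_k$ is constant and $\mathrm{Ric}_{g_k}=\mathrm{Ric}_{h_k}>0$ by Lemma~\ref{lemma:Ricci}. At a fixed point, using $\nabla^2_{h_k}\rho^2 = 2h_k + O(\rho^2)$ in normal coordinates, one has $d\phi_k=0$ and $\nabla^2_{h_k}\phi_k = (2\eps_k/\delta_k^2)\,h_k$, producing the positive-definite Ricci perturbation $(12\eps_k/\delta_k^2)\,h_k$; by continuity this positivity persists throughout the inner ball $B_{\delta_k/2}(\tilde p^\alpha_\ell)$.

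The parameters $\eps_k,\delta_k$ are forced by two requirements: $\eps_k=O(k^{-1})$ (from the $C^0$-bound on $\phi_k$) and $\eps_k/\delta_k^2=O(k^{-1})$ (from the Ricci upper bound, since the Ricci perturbation is of size $\lesssim \eps_k/\delta_k^2$). The genuine technical point, and main obstacle, is positivity of $\mathrm{Ric}_{g_k}$ on the transition annulus $B_{\delta_k}\setminus B_{\delta_k/2}$, where the Hessian of $\chi(\rho^2/\delta_k^2)$ may acquire negative eigenvalues of size $O(1/\delta_k^2)$; one then needs a quantitative lower bound of the form $\mathrm{Ric}_{h_k}\gtrsim \eps_k/\delta_k^2$ on this annulus, uniformly in $k$. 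This should follow from the explicit formula \eqref{eq:h_k} by tracing through the Ricci computation underlying Lemma~\ref{lemma:Ricci} and recording how each curvature contribution scales with the $h_k$-distance to the nearest fixed point; choosing $\delta_k$ comparable to the inter-fixed-point $h_k$-distance and $\eps_k$ small accordingly then balances all three constraints and completes the proof.
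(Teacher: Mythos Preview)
Your approach is the same as the paper's: conformally perturb $h_k$ by a cutoff of the squared distance to the $2k$ fixed points, using the dimension-four conformal Ricci formula exactly as you wrote it. The two cosmetic differences are that the paper's cutoff vanishes (rather than equals $1$) outside the balls, and the paper normalizes so that the cutoff applied to $\tfrac12 d^2$ has Hessian of order $1$ rather than $1/\delta_k^2$; neither matters.

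Where you go astray is the final paragraph. You do \emph{not} need any quantitative lower bound on $\mathrm{Ric}_{h_k}$ as a function of distance to the fixed points, and certainly nothing ``uniform in $k$''. The argument is for each $k$ separately, and the order of choices dissolves the ``obstacle'': first fix $\delta_k>0$ small (any value making the balls disjoint and $\rho^2$ smooth there). The closed set $\{\rho\ge\delta_k/2\}$ is a compact subset of $\mathcal N_k$, so Lemma~\ref{lemma:Ricci} gives $\mathrm{Ric}_{h_k}\ge c_k>0$ there for some constant $c_k$ depending on $k$ and $\delta_k$. Now choose $\eps_k>0$ last and so small that the Ricci perturbation (of size $O(\eps_k/\delta_k^2)$ in your normalization) is below both $c_k$ and $k^{-1}$, and $\eps_k\le k^{-1}$. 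Since $\eps_k$ is a free parameter chosen after everything else, there is no tension among the constraints; no scaling analysis of $\mathrm{Ric}_{h_k}$ near the singular points is needed. This is precisely how the paper argues: ``$\mathrm{Ric}_{h_k}\ge c>0$ in the set $\{\rho_k\ge r_k/2\}$, hence $\mathrm{Ric}_{g_k}>0$ there provided $\eps_k>0$ is small enough.''
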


Similar conformal perturbation arguments have been used in several other instances, for instance in \cite{Aubin} and \cite{Ehrlich}.

\begin{remark}
The $L^2$-norm of the Riemann tensor of $(M^4_k, g_k)$ is not uniformly bounded with respect to $k$. Indeed, by Chern-Gauss-Bonnet
\begin{equation}\label{eq:ChernGaussBonnet}
	\int_{M_k^4} |{\rm Riem}_{g_k}|^2 = 32 \pi^2 \chi(M^4_k) + \int_{M_k^4} 4 |{\rm Ric}_{g_k}|^2 - R^2_{g_k}\, .
\end{equation}
Although the Ricci and scalar curvature terms are bounded, $\chi(M^4_k) = 2k+1 \to \infty$ as $k \to \infty$. One can also verify that the sectional curvatures are not uniformly bounded, either from below or above.\\
\end{remark}

\subsection{Proof of Lemma \ref{lemma:smoothness}}\label{ss:lemma_smoothness_proof}

Some variants of the proof below have already appeared in the literature in slightly different contexts, see for instance \cite{AndersonKronheimerLebrun,LeBrun}.  It is also clearly enough to check that $h_k$ admits a smooth extension within each $\widetilde \pi^{-1}(B_{r}(p^\alpha_\ell))$ for some sufficiently small $0<r=r(k)<1$.\\

Let us first write each $B_{r}(p_\ell^\alpha)\subset S^3$ in polar coordinates as
\begin{align}
	&B_{r}(p_\ell^\alpha)\setminus\{p_\ell^\alpha\} \approx B_r(0^3)\setminus \{0^3\} \approx (0,r)\times S^2\, ,\notag\\
	&g_{S^3} = dt^2+\sin^2(t) g_{S^2}\, .\label{eq:gS3}
\end{align}

Recall that $\widetilde{\pi}^{-1}(B_{r}(p_\ell^\alpha)\setminus\{p_\ell^\alpha\})\cup \{\tilde p^\alpha_\ell\}\approx \dR^4$ is identified with a ball in Euclidean space.  We can make this identification a bit more explicitly as follows.  Observe that $[\omega]$, the defining cohomology class of the circle bundle, restricts to that of a Hopf bundle over each $S^2$ by \eqref{e:omega_def}.  We can write $\widetilde{\pi}^{-1}(B_{r}(p_\ell^\alpha))\approx \dR^4\setminus\{0^4\}$ in polar coordinates, as in \cite[Section 5, page 231]{LeBrun}, as the projection mapping
\begin{align}\label{eq:Projpi}
\pi:(0,r) \times S^3 \ni (s, x) \mapsto \left(\frac{s^2}{2}, \pi_{\rm Hopf}(x)\right) \in (0,r) \times S^2\, .
\end{align}

We see that in these coordinates $\pi$ extend smoothly to $\R^4$.  
Recall that the metric \eqref{eq:h_k} is written
\begin{equation}
	\begin{split}
	h_k & := \frac{1}{k\Lambda} (kV(u/k) + k\Lambda) \left( \pi^* g_{S^3} + \frac{1}{(kV(u/k) + k\Lambda)^2} \theta \otimes \theta\right)\, ,
	\\
	g_{S^3} & := dt^2 + \sin^2(t) g_{S^2} \, .
	\end{split}
\end{equation}
We have that $u$ is a solution of $- \Delta_{g_{S^3}} u = 2\pi (-1)^\alpha \delta_{0^3}$ , which we can write in these coordinates as 
\begin{align}\label{eq:uerror}
u = (-1)^\alpha\frac{1}{2}\frac{\cos(t)}{\sin(t)}+u'\, ,	
\end{align}
where $u'$ is a smooth solution of $\Delta_{S^3} u' = 0$ on $B_r(p^\alpha_\ell)$.
The connection $1$-form $\theta$ is determined by the equation $d\theta = \ast_{S^3} d u$ .  If $\theta_{\rm{Hopf}}$ is the Hopf connection $1$-form on $S^3$, then $(-1)^{\alpha}d\theta_{\rm{Hopf}}=d\theta-d\ast_{S^3}d u'$. Hence, up to gauge transformation, we have that 
\begin{align}\label{eq:thetavshopf}
	\theta = (-1)^\alpha\theta_{\rm{Hopf}}+\theta'\, ,
\end{align}
where $\theta'$ is smooth on $\widetilde{\pi}^{-1}(B_r(p^\alpha_\ell))\approx \dR^4$.

\medskip

Using the explicit polar coordinate expression of the projection $\pi: \R^4 \to \R^3$ from \eqref{eq:Projpi} and the expression for $g_{S^3}$ from \eqref{eq:gS3}, we can express 
\begin{equation}\label{exp1} 
	\pi^{*}g_{S^3} = s^2 \, ds^2 + \sin^2\left(\frac{s^2}{2}\right) \pi_{\rm Hopf}^{*}g_{S^2} \, .
\end{equation}  
On the other hand, from \eqref{eq:uerror} and the explicit expression of $V$, we can deduce that
\begin{equation}\label{exp2} 
	kV(u/k) + k\Lambda  
	= \frac{1}{2} \frac{\cos(t)}{\sin(t)} + u'' \, , 
	\quad  u'' \in C^\infty(B_{r_0}(0^3)) \, .
\end{equation}
Thus, using that $t=s^2/2$ and a Taylor expansion as $s\to 0$, 
\begin{equation}\label{eq:Taylorpotential}
kV(u/k) + k\Lambda=\frac{1}{s^2}+O(1)\, .
\end{equation}
By combining \eqref{eq:thetavshopf}, \eqref{exp1}, and \eqref{eq:Taylorpotential}, we obtain the expansion
\begin{equation}
	\begin{split}
		h_k &= \frac{1}{k\Lambda} \left( ds^2 + s^2 \pi_{\rm Hopf}^* \frac{1}{4}g_{S^2} +  s^2 \theta_{\rm Hopf} \otimes \theta_{\rm Hopf} \right) + h'
	     \\&=
		\frac{1}{k\Lambda} g_{\R^4} + h' \, ,
	\end{split}
\end{equation}
where $h' = O(r^2)$ is smooth. In particular, $h_k$ admits a smooth extension, as claimed. $\qed$\\

\subsection{Proof of Lemma \ref{lemma:invariance}}

It suffices to show that $H_3(\mathbb{Z}/k\mathbb{Z})$ acts effectively and isometrically on $(\mathcal{N}_k, h_k)$, as $(M_k, h_k)$ is the metric completion of $(\mathcal{N}_k, h_k)$.  

\medskip

Let $K\cong \mathbb{Z}/k\mathbb{Z}\times \mathbb{Z}/k\mathbb{Z}$ be the group of diffeomorphisms of $\mathcal{B}_k$ introduced in \eqref{eq:gamma_1gamma_2}. Let $H$ be the group of diffeomorphisms $\phi:\mathcal{N}_k\to \mathcal{N}_k$ such that:
\begin{itemize}
\item[(i)] $\phi$ is equivariant with respect to the $S^1$ action on $\mathcal{N}_k$, i.e., $R_{\theta}\circ\phi=\phi\circ R_{\theta}$ for every $\theta\in S^1$. Here we denoted by $R_{\theta}:\mathcal{N}_k\to\mathcal{N}_k$ the $S^1$ action on the principal $S^1$ bundle $\widetilde{\pi}:\mathcal{N}_k \to \mathcal{B}_k$;
\item[(ii)] $\phi$ is a lift of some element $\widetilde{\phi}\in K$, i.e., $\widetilde{\pi}\circ\phi=\widetilde{\phi}\circ\widetilde{\pi}$;
\item[(iii)] $\phi^*\theta=\theta$, where $\theta$ is the connection $1$-form on $\widetilde{\pi}:\mathcal{N}_k\to\mathcal{B}_k$.  
\end{itemize} 
\medskip
\noindent
\textbf{Claim 1:} $H$ acts isometrically on $(\mathcal{N}_k,h_k)$. 
\smallskip

For any $\phi\in H$ we can compute
\begin{align}
\phi^*h_k=&\, \phi^*\left(V_\Lambda(u_k)\, \widetilde{\pi}^*g_{S^3}
+\frac{1}{k^2\Lambda^2\, V_\Lambda\left(u_k\right)}\theta\otimes\theta\right)\\
=&\, V_\Lambda(u_k\circ\widetilde{\phi})\,\widetilde{\pi}^*\widetilde{\phi}^*g_{S^3}+\frac{1}{k^2\Lambda^2\, V_\Lambda\left(u_k\circ\widetilde{\phi}\right)}\phi^*\theta\otimes\phi^*\theta\\
=&\, V_\Lambda(u_k)\, \widetilde{\pi}^*g_{S^3}
+\frac{1}{k^2\Lambda^2\, V_\Lambda\left(u_k\right)}\theta\otimes\theta
=\, h_k\, ,
\end{align}
where we used (ii), (iii), and the fact that $K$ acts isometrically with respect to the standard metric on $S^3$.
\medskip

\noindent
\textbf{Claim 2:} Any $\widetilde{\phi}\in K$ admits a lift $\phi\in H$. Such a lift is unique modulo composition with the action $R_{\theta}$.
\smallskip

The Claim follows from a fairly standard lifting argument, see for instance \cite[Proposition 2.7]{HondaViaclovsky}.  Indeed, as $\widetilde{\phi}^*\omega=\omega$ and $H^2(\mathcal{B}_k,\mathbb{Z})$ is torsion-free we have that $\widetilde{\phi}$ preserves the Chern class of our $S^1$ bundle and hence lifts to an equivariant diffeomorphism $\hat\phi:\cN_k\to\cN_k$ .  This lift is well defined up to a gauge transformation, so let us observe that $d(\hat\phi^*\theta-\theta)=\omega-\omega=0$.  As $H^1(\mathcal{B}_k,\mathbb{Z})=0$ we can write $d(\hat\phi^*\theta-\theta)=-df$ with $f:\cN_k\to \dR$. We use $f$ to define our gauge tranformation we obtain $\phi = \hat\phi\cdot e^{i f}$, where $\phi^*\theta-\theta = \hat\phi^*\theta+df-\theta=0$, as claimed.  
\medskip

\noindent
\textbf{Claim 3:} $H$ contains a subgroup isomorphic to $H_3(\mathbb{Z}/k\mathbb{Z})$.
\smallskip

The restriction of $\omega$ to the Clifford torus $\Sigma:=\{|z_1|=|z_2|\}\subset \mathcal{B}_k$ is cohomologous to the $2\pi k$-multiple of the volume form. Indeed, up to a choice of orientation, from \eqref{eq:domega} we deduce that
\begin{equation}
	\int_\Sigma \omega = \int_{\{|z_1| < |z_2|\}} \Delta_{S^3} u = 2\pi k \, .
\end{equation}
Hence $\widetilde{\pi}: \mathcal{N}_k\to \mathcal{B}_k$ restricts to a principal $S^1$ bundle over the Clifford torus isomorphic to the degree $k$ Heisenberg nilmanifold ${\rm Nil}_k^3$.\\
Note that $K$ leaves the Clifford torus $\Sigma$ invariant. Hence by (ii) we can restrict the action of $H$ on $\mathcal{N}_k$ to an isometric action on a Riemannian Heisenberg nilmanifold ${\rm Nil}_k^3$ respecting the principal $S^1$ bundle structure. As the nilmanifold ${\rm Nil}_k^3$ is codimension one and the isometries of $H$ are orientation preserving, we see that the restriction homomorphism $r:H\to\mathrm{Iso}({\rm Nil}_k^3)$ is injective.  Moreover, we see it is an isomorphism onto the subgroup of isometries of ${\rm Nil}_k^3$ which are lifts of the the $\mathbb{Z}/k\mathbb{Z}\times \mathbb{Z}/k\mathbb{Z}$ action on $\Sigma\approx T^2$, viewed as the base of $S^1\to {\rm Nil}_k^3\to T^2$.  It then follows from the construction of Section \ref{sec:HeisenbergNil} that $r(H)$ (and hence $H$) has a subgroup isomorphic to $H_3(\mathbb{Z}/k\mathbb{Z})$ acting as in \eqref{eq:widetilderho}.  $\qed$ \\

\subsection{Proof of Lemma \ref{lemma:Ricci}}

We will prove first that for each $\eps>0$ if $\Lambda=\Lambda(\eps) \geq 1$ is sufficiently large, actually independent of $k$, then ${\rm diam}(M_k) \leq \pi+\eps$.  In particular, if $\Lambda(k)\to \infty$ then we have the claimed diameter bound on $M^4_k$.  

First observe that if $\Lambda\geq \Lambda(\delta)$ then $k^2 \Lambda^2 V_\Lambda \geq \delta^{-2}$, which ensures that the fiber lengths of the principal bundle $\widetilde{\pi} : \mathcal{N}_k \to \mathcal{B}_k$ are less than $\delta$ for every $k \geq 1$.  Thus it is sufficient to establish a uniform diameter bound for the metrics $V_\Lambda(u_k)\, g_{S^3}$ on the base space.  We may also focus our attention on $\mathcal{B}_k$, as $M^4_k$ is the (metric) completion of $(\mathcal{N}_k, h_k)$.

\medskip
To prove the diameter bound on $V_\Lambda(u_k) g_{S^3}$ we need to estimate $u_k$.  Let us begin with the following:\\

{\bf Claim: }$\exists$ $C>0$, independent of $k$, so that $|u_k(z)| \le C + |z_1|^{-1} + |z_2|^{-1}\, .$\\

Recall that $u_1:S^3\to [-\infty,+\infty]$ is the unique solution of 
\begin{equation}\label{eq:defu1}
	- \Delta_{S^3} u_1 = 2\pi \left(\delta_{(1,0)} - \delta_{(0,1)}\right)\, ,
\end{equation}
such that $u_1\circ\iota=-u_1$.  Note that we can also identify $u_1$ as the difference between the Green's functions on $S^3$ at $(1,0)$ and $(0,1)$.  Using the standard estimates on Green's functions in dimension three we have for $(z_1,z_2)\in S^3\subseteq \dC\times \dC$ the estimate
\begin{align}
	\Big|u_1(z_1,z_2)-\frac{1}{\sqrt{|z_1-1|^2+|z_2|^2}}+\frac{1}{\sqrt{|z_1|^2+|z_2-1|^2}}\Big|\leq C\, .
\end{align}
As a consequence we have the (very poor) estimate
\begin{align}\label{eq:roughest1}
	|u_1(z)|\leq C+|z_2|^{-1}+|z_1|^{-1}\, ,
\end{align}
which bounds $u_1$ uniformly away from the singular fibers $F_0$ and $F_1$.
For each $k\ge 2$ we can write
\begin{equation}\label{eq:iduk}
u_k(z_1,z_2)=\frac{1}{k}\sum_{\ell=0}^{k-1}u_1(e^{\frac{2\pi i\ell}{k}}z_1,e^{\frac{2\pi i\ell}{k}}z_2)\, .
\end{equation}

In particular we arrive at the (less poor, as $k$ gets large) estimate 
\begin{equation}\label{eq:roughest2}
	|u_k(z)| \le C + |z_1|^{-1} + |z_2|^{-1}\, ,
\end{equation}
which finishes the proof of the claim.\\

With the claim in hand we can now estimate $V_\Lambda(u_k):= \Lambda^{-1}V(u_k)+1$. To begin, we can directly plug in the estimate of the claim to get
\begin{equation}\label{eq:roughest3}
	\begin{split}
	V_\Lambda(u_k(z_1,z_2)) &\le \Lambda^{-1}(C + |z_1|^{-1} + |z_2|^{-1}) + 1 \,  ,
	\end{split}	
\end{equation}
for every $(z_1,z_2)\in S^3$.  Consider for $r>0$ the set $S^3\setminus B^{S^3}_{r}(F_0\cup F_1)$.  Note we have subtracted the $r$-tube with respect to the $S^3$ metric around each fiber off. Then using \eqref{eq:roughest3} we can choose $\Lambda\geq \Lambda(C,r,\delta)>0$ sufficiently large that
\begin{align}\label{eq:VLambdabound1}
	|V_\Lambda(u_k)-1|\leq \begin{cases}
 	\delta^3\left(1+\frac{1}{|z_1|} + \frac{1}{|z_2|} \right) &\text{ in } B^{S^3}_{r}(F_0\cup F_1)\\
 	\delta^3 &\text{ in }S^3\setminus B^{S^3}_{r}(F_0\cup F_1)\, .
 \end{cases}
\end{align}

In particular, the metric $V_\Lambda(u_k) g_{S^3}$ is uniformly close to the $S^3$ metric on the set $S^3\setminus B^{S^3}_{r}(F_0\cup F_1)$, and it is then clear that the diameter of this set is $\leq (1+\delta)\pi$.  To estimate the diameter of $\cB_k$ we are left showing that each point of $B^{S^3}_{r}(F_0)$ (and $B^{S^3}_{r}(F_1)$) can be connected to the boundary by a curve of small length. To this end we have the following, which focuses on $F_0$:\\

\textbf{Claim:} Let $z_1\in \C$ be such that $|z_1|=1$. Consider the curve $\gamma_{z_1}:(0,2\sqrt{r})\to S^3$ defined as
\begin{equation}
	\gamma_{z_1}(t):=(\sqrt{1-t^2}\,z_1,t^2)\, .
\end{equation}
If $\ell_k(\gamma_{z_1})$ denotes the length of $\gamma_{z_1}$ with respect to the metric $V_\Lambda(u_k)\, g_{S^3}$, then
\begin{equation}
	\ell_k(\gamma_{z_1}) \le 6r+24\delta^{3/2}\sqrt{r}\, .
\end{equation}

\medskip

To prove the claim, let us use \eqref{eq:VLambdabound1} and the estimate $|\gamma'_{z_1}(t)|\le 3t$ to obtain 
\begin{align}
\ell_k(\gamma_{z_1})
=&
\int_{0}^{2\sqrt{r}}\sqrt{V_\Lambda(u_k(\gamma_{z_1}(t)))}|\gamma'_{z_1}(t)| dt\\
\le & \,\int_{0}^{2\sqrt r}\left(1+4\frac{\delta^{3/2}}{t}\,\right)\, \cdot 3t\, dt\\
\le & \, 6r+24\delta^{3/2}\sqrt{r}\, ,
\end{align}

as claimed.\\

If we now choose $r=r(\eps)>0$, $\delta(\eps)>0$ sufficiently small with $\Lambda>\Lambda(C,r,\eps,\delta)>0$ sufficiently big, we can combine the estimates of this subsection to arrive at the diameter bound $\text{diam}(M^4_k)\leq \pi+\eps$, as claimed. $\qed$\\

\subsection{Proof of Lemma \ref{lemma:Ricci}, Ricci Curvature Bound}

Let $U$ and $X$ be a unit vertical vector and a unit horizontal vector for the $S^1$ bundle metric $h_{k}$ on $\widetilde{\pi}:\mathcal{N}_k\to \mathcal{B}_k$ respectively. We claim that
\begin{align}
\label{eq:vertfin}	{\rm Ric}_{h_k}(U,U)  =&\, \frac{ |d u_k|^2}{2} \frac{\Lambda}{V(u_k) + \Lambda}\left[\frac{V''(u_k)}{V(u_k) + \Lambda}+\frac{1-(V'(u_k))^2}{(V(u_k) + \Lambda)^2}\right] \, ,
	\\
\label{eq:mixedfin}	{\rm Ric}_{h_k}(U,X)  =& \,  0\, ,
	\\
\label{eq:basefin}	{\rm Ric}_{h_k}(X,X) 
	 = & \,\frac{\Lambda}{V(u_k) + \Lambda}\Big(2 +\frac{(1-(V'(u_k))^2)}{2(V(u_k)+\Lambda)}[du_k(X)]^2\\
	 \label{eq:basefin2}
	& - \frac{ |d u_k|^2}{2} \left[\frac{V''(u_k)}{V(u_k) + \Lambda}+\frac{1-(V'(u_k))^2}{(V(u_k) + \Lambda)^2}\right]\Big) ,
\end{align} 
where $|d u_k|$ is computed with respect to $g_{S^3}$, and with a slight abuse of notation, we denoted $du(X):=du(d\widetilde{\pi}(X))$. 

We postpone the verification of the expressions of the Ricci curvature to the end of the subsection and first exploit them to complete the proof of Lemma \ref{lemma:Ricci}.

\medskip

Consider the identities
\begin{equation}
	1 - (V'(x))^2 = \frac{4}{e^{4\pi x} + e^{-4\pi x} + 2} \, ,
	\quad
	V''(x) = \frac{8\pi}{e^{4\pi x} + e^{-4\pi x} + 2}\, .
\end{equation}
We can use the asymptotic expansion of $u_k$ near the poles to estimate
\begin{equation}
	\frac{|du_k|^2}{e^{4\pi u_k} + e^{-4\pi u_k}
	+ 2} \le C(k) \, .
\end{equation}
If we combine with the previous estimate we get 
\begin{align}
0&<|du_k|^2 V''(u_k)\leq C(k)\notag\\
0&< |du_k|^2(1-(V'(u_k))^2) \leq C(k)\, .
\end{align}
If we choose $\Lambda\geq \Lambda(k,\delta)$ we then get the estimates
\begin{align}
0&<|du_k|^2\frac{ V''(u_k)}{V(u_k)+\Lambda}\leq \delta\notag\\
0&< |du_k|^2\frac{(1-(V'(u_k))^2)}{V(u_k)+\Lambda} \leq \delta\, ,
\end{align}
as well as
\begin{align}
0\leq \frac{\Lambda}{V(u_k)+\Lambda} \leq 1\, ,	
\end{align}
with strict positivity near the poles.  If we plug these into our formulas for the Ricci curvature we arrive at the estimates (away from the poles of $u_k$)
\begin{align}
	0<&\, {\rm Ric}_{h_k}(U,U)\leq \delta\, ,\notag\\
	&\, {\rm Ric}_{h_k}(X,U) = 0\, ,\notag\\
	0<&\, {\rm Ric}_{h_k}(X,X)<2+10\delta\, ,
\end{align}
which proves our desired Ricci curvature bounds.

\bigskip
We are left with the verification of the expressions \eqref{eq:vertfin}, \eqref{eq:mixedfin}, and \eqref{eq:basefin}, for the Ricci curvatures of the metric $g_k$. To this aim, it is convenient to view
\begin{equation}\label{eq:useless}
h_{k}=\frac{1}{k \Lambda}\left[W_k(u)\, \widetilde{\pi}^*g_{S^3}+\frac{1}{W_k(u)}\theta\otimes\theta\right]\, ,
\end{equation}
for $W_k(x):=k(V(x/k) + \Lambda)$ for each $k\ge 1$, and compute the Ricci curvatures of a general Riemannian circle bundle metric of the form 
\begin{equation}
W\, \widetilde{\pi}^*g_{S^3}+\frac{1}{W}\theta\otimes\theta\, ,
\end{equation}
with $W:S^3\to (0,+\infty]$. If $\omega$ denotes the curvature $2$-form of such circle bundle, $U$ is a unit vertical vector and $X$ any horizontal vector, then there hold 
\begin{align}
\label{eq:RicUUgen}{\rm Ric}(U,U)  = &\, \frac{1}{2} W^{-2} \Delta_{g_{S^3}} W + \frac{1}{2} W^{-3}\left( |\omega|_{g_{S^3}}^2 - |\nabla W|_{g_{S^3}}^2 \right)\, ,
\\
\label{eq:RicUXgen}{\rm Ric}(U,X)  = &\,  -\frac{1}{2} W^{-3/2}\left(\delta_{g_{S^3}} \omega (X) - W^{-1} \omega(\nabla^{g_{S^3}} W,X)\right)\, ,
\\
\nonumber {\rm Ric}(X,X)  =&\, 2 |X|_{g_{S^3}}^2  - \frac{1}{2} \frac{\Delta_{g_{S^3}}W}{W} |X|_{g_{S^3}}^2
\\ 
\label{eq:RicXXgen}\quad \quad &\, - W^{-2}\frac{1}{2}\left(|\iota_X \omega|^2_{g_{S^3}} - |dW \wedge X^{\flat}|^2_{g_{S^3}}\right)\, .
\end{align}
To prove \eqref{eq:RicUUgen}, \eqref{eq:RicUXgen}, and  \eqref{eq:RicXXgen} we can start from the well-known formulas for the Ricci curvatures of Riemannian circle bundles $\pi:M\to B$ with fibres length $f$, curvature $2$-form $\omega$, and induced Riemannian metric on the base $g_B$. Namely 
\begin{align}
\label{eq:RicUUstd} {\rm Ric}(U,U) = &\,-\frac{\Delta_B f}{f} +\frac{f^2}{2}|\omega|^2_B\, ,\\
\label{eq:RicUXstd} {\rm Ric}(U,X)  = &\, \frac{1}{2}\left(-f\delta_B\omega(X)+3\omega(X,\nabla^Bf)\right),\\
\label{eq:RicXXstd} {\rm Ric}(X,X)  = & \, {\rm Ric}_B(X,X)-\frac{f^2}{2}|\omega(X)|_B^2-\frac{\nabla^2_Bf(X,X)}{f}\, ,
\end{align}
where as above $U$ denotes a unit vertical vector and $X$ denotes any horizontal vector. See for instance \cite[Proposition 9.36]{Besse}, or \cite{GilkeyParkTuschmann}. 

To get from \eqref{eq:RicUUstd} to \eqref{eq:RicUUgen} it suffices to exploit the formula for the transformation of the Laplacian under a conformal change in dimension $3$, i.e.,
\begin{equation}
\Delta_{Wg_{S^3}}f= W^{-1}\left(\Delta_{S^3}f+\frac{1}{2}W^{-1}g_{S^3}(\nabla^{g_{S^3}} W,\nabla^{g_{S^3}} f) \right)\, ,
\end{equation}
and the norm of $2$-forms
\begin{equation}
|\omega|^2_{Wg_{S^3}}=W^{-2}|\omega|^2_{g_{S^3}}\, .
\end{equation}
To get from \eqref{eq:RicUXstd} to \eqref{eq:RicUXgen} we rely on the formula for the transformation of the divergence for $2$-forms under conformal changes, i.e.,
\begin{equation}
\delta_{Wg_{S^3}}\omega=W^{-1}\left(\delta_{g_{S^3}}\omega+\frac{1}{2}W^{-1}\iota_{\nabla^{g_{S^3}} W}\omega\right)\, .
\end{equation}
To obtain \eqref{eq:RicXXgen} from \eqref{eq:RicXXstd} we combine the formulas for the transformation of Ricci curvature and Hessians under conformal changes to establish the identity
\begin{align}
 \nonumber {\rm Ric}_{Wg_{S^3}} - \frac{ {\rm Hess}_{Wg_{S^3}} W^{-1/2} }{W^{-1/2}} 
 =&\,  {\rm Ric}_{g_{S^3}} - \frac{1}{2} \left( \frac{\Delta_{g_{S^3}} W}{W} - \frac{|dW|^2}{W^2} \right) g_{S^3}\\ 
 &- 2 \frac{dW}{W} \otimes \frac{dW}{W}\, .
\end{align}
\medskip

Under the assumption that $\omega=\ast du$, where $u:\mathcal{B}_k \to \R$ is harmonic with respect to the round metric $g_{S^3}$, \eqref{eq:RicUUgen}, \eqref{eq:RicUXgen}, and  \eqref{eq:RicXXgen} simplify into 
\begin{align}
\label{eq:RicUU1}	{\rm Ric}(U,U) & = \frac{1}{2} \frac{\Delta W}{W^2} + \frac{1}{2} \frac{|du|^2 - |dW|^2}{W^3}\, ,
	\\
\label{eq:RicUX1}	{\rm Ric}(U,X) & = \frac{1}{2} \frac{\ast(du \wedge dW)(X)}{W^{5/2}}\, ,
	\\
\label{eq:RicXX1}	{\rm Ric}(X,X) & =  2 |X|^2 - \frac{1}{2} \frac{\Delta W}{W} |X|^2
	- \frac{1}{2}
	\frac{|du \wedge X^{\flat}|^2 - |dW \wedge X^{\flat}|^2}{W^2}\, .
\end{align}
Above, it is understood that norms and Laplacians are computed with respect to the round metric $g_{S^3}$. 
If we also assume that, with a slight abuse of notation, $W=W\circ u$, then we can rewrite \eqref{eq:RicUU1}, \eqref{eq:RicUX1}, and \eqref{eq:RicXX1} as
\begin{align}
\label{eq:RicUUalm}	{\rm Ric}(U,U)  = &\, \frac{1}{2} \frac{|du|^2}{W} 
	\left( \frac{W''}{W} + \frac{1 - (W')^2}{W^2}  \right) \, ,
	\\
\label{eq:RicUXalm}	{\rm Ric}(U,X)  = &\,  0\, ,
	\\
\label{eq:RicXXalm}	{\rm Ric}(X,X) = &\,  2|X|^2+ \frac{1 - (W')^2}{2W^2} (du(X))^2\\
\nonumber	&\, - \frac{1}{2}\left( \frac{W''}{W} + \frac{1 - (W')^2}{W^2}  \right) |du|^2 |X|^2\, .
\end{align}
The expressions \eqref{eq:vertfin}, \eqref{eq:mixedfin}, and \eqref{eq:basefin} can be easily obtained from \eqref{eq:RicUUalm}, \eqref{eq:RicUXalm}, and \eqref{eq:RicXXalm}, recalling that $W_k(x):=k(V(x/k) + \Lambda)$ and scaling to account for the $\frac{1}{k\Lambda}$ factor in \eqref{eq:useless}. $\qed$

\subsection{Proof of Lemma \ref{lemma:Ricci2}}
Recall that $M_k \setminus \mathcal{N}_k$ is a set of $2k$ points forming an orbit of the isometric action of the Heisenberg group $H_3(\mathbb{Z}/k\mathbb{Z})$. We define $\rho_k(x):= \frac{1}{2}d_{h_k}(x, M_k \setminus \mathcal{N}_k)^2$, the square of the distance function to this set with respect to the metric $h_k$. Note that $\rho_k$ is globally $H_3(\mathbb{Z}/k\mathbb{Z})$-invariant and smooth in a sufficiently small neighborhood of $M_k \setminus \mathcal{N}_k$. Let $\eta_k \in C^\infty(\mathbb{R})$ be a smooth function such that $\eta_k(t) = t$ for $t \leq r_k$ and $\eta_k(t) = 0$ for $t \geq 2r_k$, where $r_k > 0$ is sufficiently small to ensure that $\eta_k(\rho_k) \in C^\infty(M_k)$ and
\begin{equation}\label{eq:rhok}
{\rm Hess}_{h_k} \rho_k \ge \frac{1}{2}h_k \, , 
\quad 
|d \rho_k |_{h_k} \le 10^{-2} \, , \quad \text{when $\rho_k \le r^2_k$} \, .
\end{equation}
The existence of such $r_k$ follows from the fact that
\begin{equation}
{\rm Hess}_{h_k}\rho_k= h_k\, ,\quad d \rho_k=0\, ,
\end{equation}
for every ${\tilde p^\alpha_\ell}\in M_k\setminus\mathcal{N}_k$.

 Finally, we define $\phi_k := \eps_k \eta_k(\rho_k)$, where $\eps_k > 0$ is a small parameter to be chosen later. The Ricci curvature of $g_k:= e^{- 2 \phi_k} h_k$ is given by
\begin{equation}
{\rm Ric}_{g_k} = {\rm Ric}_{h_k} + 2 {\rm Hess}_{h_k} \phi_k + 2 d\phi_k \otimes d \phi_k + ( \Delta_{h_k} \phi_k - 2| d\phi_k|_{h_k})h_k \, .
\end{equation}
It is clear that ${\rm Ric}_{g_k}>0$ in the set $\{\rho_k \le r_k/2\}$ as a consequence of \eqref{eq:rhok}. On the other hand, ${\rm Ric}_{h_k}\ge c>0$ in the set $\{\rho_k \ge r_k/2\} \subset \mathcal{N}_k$, hence ${\rm Ric}_{g_k}$ is positive in this region provided $\eps_k>0$ is small enough.
Similarly, we have the upper bound ${\rm Ric}_{g_k}\le {\rm Ric}_{h_k}+C\eps_k$. $\qed$

\section{Proof of Theorem \ref{thm:mainIntro}}\label{sec:Proofmainthm}

The Heisenberg actions in Theorem \ref{thm:HeisenberIntro} are not free, and thus do not yield the desired fundamental group. One can verify that the quotient of $M_k^4$ by the Heisenberg action $H_3(\mathbb{Z}/k\mathbb{Z})$ is homeomorphic to the four-sphere $S^4$.\\

To address this issue, consider the oriented frame bundle $\pi_{FM}:FM_k \stackrel{\mathrm{SO}(4)}{\longrightarrow}M^4_k$.  Given constants $d_k>0$ there is a unique principal bundle metric $g^{FM}_k$ on $FM_k$ defined by the conditions:
\begin{enumerate}
	\item The right $\mathrm{SO}(4)$ actions on $FM_k$ are isometric. More specifically, let $\{v^a\}$ be an orthonormal basis of $\mathrm{so}(4)$, the Lie algebra of $\mathrm{SO}(4)$, with respect to the standard Frobenius inner product. 
	   If $V^a$ denotes the corresponding pushforward vertical vector fields on $FM_k$, then the metric satisfies $g^{FM}_k(V^a, V^b) = d_k^2 \, \delta^{ab}$.

	\item Consider the natural connection form $\theta \in \Omega^1(FM_k; \mathrm{so}(4))$ associated with the Levi-Civita connection $\nabla_k$ on $(M_k, g_k)$.  If $H^j$ are vector fields on $M_k$ we may lift them to horizontal vector fields on $FM_k$, sloppily also denoted by $H_k$.  Then $g^{FM}_k(H^j,V^a)=0$.

	\item If $H^j$ are horizontal lifts of vector fields on $M_k$, then $g^{FM}_k(H^j,H^k) = g_k(H^j,H^k)$.
\end{enumerate}

We begin with the usual observation about isometries on $M_k$ and their lifts to $FM_k$. Namely, note that an isometry on $M_k$ maps an orthonormal frame to an orthonormal frame, and thus lifts to a mapping on $FM_k$.  It is easy to check this lifted mapping is an isometry with respect to a metric as above and an $\mathrm{SO}(4)$-bundle automorphism, i.e., it commutes with the right $\mathrm{SO}(4)$-action on $FM_k$.  Additionally, since an isometry fixes a point and a frame (e.g. its derivative is the identity) if and only if it is globally the identity, we have that the lifted isometric action on $FM_k$ is free.  In particular, we can lift our $H_3(\dZ/k\dZ)$ action to a {\it free} action on $FM_k$. 

To understand the Ricci curvature of $FM_k$, let us begin with the observation that if $d_k$ is very small then our $SO(4)$ fibers are very small.  In particular, they are very positively Ricci curved.  We want to see that the positive Ricci curvature of $M_k$ together with the potentially very positive Ricci curvature of the fibers leads to positive Ricci curvature of $FM_k$.

More precisely, let $V$ be a $g^{FM}$ unit vertical vector with $H$ unit a horizontal vector.  Note that the $V$ can be identified as a $2$-form in $M_k$ (in local coordinates  $\sum_{ij}\theta_{ij}(V) dx_i \wedge dx_j$), and so we can identify ${\rm Rm}_k[V]$ as a $2$-form.  Now recall the formulas (see \cite{ONeill} and also \cite{BerardBergery,Nash,Besse}):
\begin{align}
	{\rm Ric}^{FM}(V,V) &= d_k^{-2} + \frac{1}{4}|{\rm Rm}_k[V]|^2_k\geq d_k^{-2}\, ,\notag\\
	{\rm Ric}^{FM}(V,H) &= \,\langle \text{div} {\rm Rm}_k[H],V\rangle_{FM} \, ,\implies |{\rm Ric}^{FM}(V,H)|\leq Cd_k\, ,\notag\\
	{\rm Ric}^{FM}(H,H) &= {\rm Ric}_k(H,H)
	- \frac{3}{4}d_k^2 \sum_{i=1}^3 |{\rm Rm_k}(H,H_i)|^2_k\geq \epsilon_k-C d_k^2\, ,
\end{align}
where $H, H_1, H_2, H_3$ form an orthonormal frame of $(M_k,g)$, $C=C(n, {\rm Rm}_k, \nabla{\rm Rm}_k)$ and $\epsilon_k$ is the positive Ricci lower bound on $M^4_k$.  In particular, if $d_k$ is sufficiently small (depending on the size of ${\rm Rm}_k$ and $\nabla{\rm Rm}_k$ and the lower Ricci bound on $M^4_k$), then positive Ricci on $M^4_k$ implies positive Ricci on $FM_k$ .

We have thus built a ten-dimensional compact manifold with positive Ricci curvature and a free action of $H_3(\dZ/k\dZ)$.  However, we are not quite done yet.  As $M_k$ is homeomorphic to the connected sum of $(k-1)$ copies of $S^2\times S^2$ we have that $M_k$ is a spin manifold. Indeed, as noted in Section \ref{subsec:sinS1bundle}, the classification results in \cite{ChurchLamotke} yield the stronger statement that $M_k$ is diffeomorphic to the connected sum of $(k-1)$ copies of $S^2\times S^2$.
In particular, although $M_k$ is simply connected, its frame bundle $FM_k$ is not. More precisely, $\pi_1(FM_k)=\dZ/2\dZ$.
We therefore consider the spin frame bundle $P_{\rm Spin}M_k$, which is the double cover of $FM_k$. It is simply connected, and the action of $H_3(\dZ/k\dZ)$ on $FM_k$ lifts to an action of a $\dZ/2\dZ$-extension of $H_3(\dZ/k\dZ)$ on $P_{\rm Spin}M_k$. To simplify notation, we denote this extension by $G_k$. Note that every element $g\in H_3(\dZ/k\dZ)$ admits two lifts in $G_k$ which are $\mathrm{Spin}(4)$-bundle automorphisms and isometries on $P_{\rm Spin} M_k$. In particular, the $G_k$-action and the $\mathrm{Spin}(4)$ action do commute.

\subsection{Dimension $n=9$}
To reduce the dimension of $P_{\rm Spin} M_k$ from $10$ to $9$ and conclude the proof of Theorem \ref{thm:mainIntro}, we quotient $P_{\rm Spin} M_k$ by a suitably chosen closed subgroup $S^1<\mathrm{Spin}(4)$ and consider the induced action of $G_k$.

Since the actions of $G_k$ and $\mathrm{Spin}(4)$ commute, for any closed subgroup $H<\mathrm{Spin}(4)$, the quotient $P_{\rm Spin} M_k/H$, endowed with the quotient metric, carries a naturally induced isometric action of $G_k$.

We let $T^2<\mathrm{Spin}(4)\cong \mathrm{SU}(2)\times \mathrm{SU}(2)$ be the maximal torus described by
\[
T^2 = \left\{
\left(
\begin{pmatrix}
e^{i\theta_1} & 0 \\
0 & e^{-i\theta_1}
\end{pmatrix},
\begin{pmatrix}
e^{i\theta_2} & 0 \\
0 & e^{-i\theta_2}
\end{pmatrix}
\right)
\,:\,
\theta_1,\theta_2 \in \mathbb{R}
\right\}.
\]
Note that $T^2$ is totally geodesic with respect to any biinvariant metric on $\mathrm{Spin}(4)$. Thus, any closed circle subgroup $S^1\cong H<T^2$ is totally geodesic in $\mathrm{Spin}(4)$ as well. We deduce that, for any such $H<T^2$, the orbits of the induced (right-)$H$ action on $P_{\rm Spin} M_k/H$ are totally geodesic. By the formulas for the Ricci curvature of Riemannian circle bundles (see for instance \cite{Besse,GilkeyParkTuschmann}) we infer that $P_{\rm Spin} M_k/H$ has positive Ricci curvature. It is also straightforward to check that any such quotient $\widetilde N_k^9:=P_{\rm Spin} M_k/H$ is simply connected, by applying long exact sequence in homotopy to the circle bundles $S^1\to P_{\rm Spin} M_k\to \widetilde N^{9}_k$.  

To conclude the proof, it suffices to show the following:

\medskip
\noindent
{\bf Claim.} There exists a closed circle subgroup $H<T^2$ such that the induced $G_k$ action on $P_{\rm Spin} M_k/H$ is free. 
\medskip

Fix $p\in \widetilde N^{10}_k$. For every $g\in G_k$ which fixes the $\mathrm{Spin}(4)$-fiber containing $p$ (equivalently, such that the corresponding element $[g]\in H_3(\mathbb{Z}/k\mathbb{Z})$ fixes $\pi(p)$) there exists a unique element $k_g(p)\in \mathrm{Spin}(4)$ such that $g\cdot p=p\cdot k_g(p)$. 
Note that the map $g\to k_g(p)$ is a group homomorphism, in particular $k_g(p)$ has always finite order as $G_k$ is a finite group.

\begin{remark}
	Although this will not be used in the sequel, we note that if $p'$ lies in the same $\mathrm{Spin}(4)$-fiber as $p$, then $k_g(p)$ and $k_g(p')$ are conjugate. Indeed, let $A\in\mathrm{Spin}(4)$ be the unique element such that $p=p'\cdot A$. Then $k_g(p)=A^{-1}\cdot k_g(p')\cdot A$.
\end{remark}

For any closed subgroup $H<\mathrm{Spin}(4)$, the induced $G_k$-action on $P_{\rm Spin} M_k/H$ is free if and only if there do not exist $g\in G_k$ with $g\neq e$ and $p\in P_{\rm Spin} M_k$ as above with $k_g(p)\in H$. In order to find a closed circle subgroup $H<T^2$ with this property, it suffices to show that there are only finitely many $k_g(p)$ for some $g\in G_k$ and $p\in \widetilde N^{10}_k$ in $T^2$. 

To this end, observe that the eigenvalues of $k_g(p)$ are $\mathrm{Ord}(g)$-th roots of unity. Moreover, they depend continuously on $p$. It follows that the eigenvalues are constant on each connected component of $\pi^{-1}(\mathrm{Fix}([g]))\subset P_{\rm Spin} M_k$. The claim then follows because $G_k$ is finite, each fixed-point set $\mathrm{Fix}([g])\subset M^4_k$ has only finitely many connected components, and every $k_g(p)\in T^2$ is diagonal and hence uniquely determined by its eigenvalues up to permutation of the diagonal elements. $\qed$


\begin{thebibliography}{GMS13}
    	
\bibitem{AndersonKronheimerLebrun}
\textsc{Anderson, Michael T.; Kronheimer, Peter B.; LeBrun, Claude:}
\textit{Complete Ricci-flat K\"ahler manifolds of infinite topological type.}
Commun. Math. Phys. {\bf 125}, no. 4, 637-642 (1989).


\bibitem{Aubin}
\textsc{Aubin, Thierry:}
\textit{Métriques riemanniennes et courbure.}
J. Differ. Geom. {\bf 4}, 383-424 (1970).

\bibitem{BerardBergery}
\textsc{Bérard Bergery, Lionel:}
\textit{Certains fibres à courbure de Ricci positive.}
C. R. Acad. Sci., Paris, Sér. A {\bf 286}, 929-931 (1978).

\bibitem{Besse}
\textsc{Besse, Arthur L.:}
\textit{Einstein manifolds.}
Ergebnisse der Mathematik und ihrer Grenzgebiete. 3. Folge, Bd. 10. Berlin etc.: Springer-Verlag. XII, 510 p. (1987).

 \bibitem{BNS_Milnor}
 \textsc{Bruè, Elia; Naber, Aaron; Semola, Daniele:}
\textit{Fundamental Groups and the Milnor Conjecture.}
Ann. Math. {\bf 201}, no. 1, 225-289 (2025).  

 \bibitem{BNS_Milnor6}
 \textsc{Bruè, Elia; Naber, Aaron; Semola, Daniele:}
\textit{Six Dimensional Counterexample to the Milnor Conjecture.}
To appear on JEMS, DOI 10.4171/JEMS/1737. 

 \bibitem{BNS_FY4d}
 \textsc{Bruè, Elia; Naber, Aaron; Semola, Daniele:}
\textit{Fukaya-Yamaguchi conjecture in dimension four.}
J. Math. Study {\bf 58} (2025), no. 4, 526–530. 

		
\bibitem{CheegerGromollRic}
\textsc{Cheeger, Jeff; Gromoll, Detlef:}
\textit{The splitting theorem for manifolds of nonnegative Ricci curvature.}
J. Differ. Geom. {\bf 6}, 119-128 (1971).  
 
	\bibitem{CheegerGromollsoul}
\textsc{Cheeger, Jeff; Gromoll, Detlef:}
\textit{On the structure of complete manifolds of nonnegative curvature.}
Ann. Math. (2) {\bf 96}, 413-443 (1972).	
 
 \bibitem{ChenViaclovskyZhang}
 \textsc{Chen, Gao; Viaclovsky, Jeff; Zhang, Ruobing:}
\textit{Collapsing Ricci-flat metrics on elliptic K3 surfaces.}
Commun. Anal. Geom. {\bf 28}, no. 8, 2019-2133 (2020).
 
 
 	\bibitem{ChurchLamotke}
 \textsc{Church, Philip T. and Lamotke, Klaus:}
 \textit{Almost free actions on manifolds.}
 Bull. Austral. Math. Soc. {\bf 10}, 177--196 (1974).
 
 
 \bibitem{CsikosBalazsSzabo}
 \textsc{Csikós, Balázs; Pyber, László; Szabó, Endre:}
\textit{Diffeomorphism Groups of Compact 4-manifolds are not always Jordan.}
Preprint, arXiv:1411.7524 (2014).
 
\bibitem{CsikosBalazsSzabo2}
\textsc{Csikós, Balázs; Pyber, László; Szabó, Endre:}
\textit{Finite subgroups of the homeomorphism group of a compact topological manifold are almost nilpotent.}
Preprint, arXiv:2204.13375 (2022).

 \bibitem{DinkelbachLeeb}
 \textsc{Dinkelbach, Jonathan; Leeb, Bernhard:}
 \textit{Equivariant Ricci flow with surgery and applications to finite group actions on geometric 3-manifolds.}
Geom. Topol. {\bf 13}, no. 2, 1129-1173 (2009). 
   
   
 \bibitem{Ehrlich}
 \textsc{Ehrlich, Paul:}
\textit{Metric deformations of curvature. I: Local convex deformations.}
Geom. Dedicata {\bf 5}, 1-23 (1976).  
   
    
\bibitem{FukayaYamaguchi}
 \textsc{Fukaya, Kenji; Yamaguchi, Takao:}
\textit{The fundamental groups of almost nonnegatively curved manifolds.}
Ann. Math. (2) {\bf 136}, no. 2, 253-333 (1992).  


\bibitem{GibbonsHawking}
\textsc{Gibbons, Gary; Hawking, Stephen:}
\textit{Gravitational multi-instantons.}
Phys. Lett. {\bf 78B}, 430-432, (1978).

\bibitem{GilkeyParkTuschmann}
\textsc{Gilkey, Peter B.; Park, Jeong Hyeong; Tuschmann, Wilderich:}
\textit{Invariant metrics of positive Ricci curvature on principal bundles. }
Math. Z. {\bf 227}, no. 3, 455-463 (1998).

    
\bibitem{Gromovbetti}
\textsc{Gromov, Michael:}
\textit{Curvature, diameter and Betti numbers.}  
Comment. Math. Helv. {\bf 56}, 179-195 (1981).  
    
\bibitem{Gross}
\textsc{Gross, Mark:}
\textit{Topological mirror symmetry.}
Invent. Math. {\bf 144}, no. 1, 75-137 (2001).    
    
    
 \bibitem{GrossWilson}
 \textsc{Gross, Mark; Wilson, P. M. H.:}
\textit{Large complex structure limits of K3 surfaces.}
J. Differ. Geom. {\bf 55}, no. 3, 475-546 (2000).   
    
\bibitem{Hein}
\textsc{Hein, Hans-Joachim:}
\textit{Gravitational instantons from rational elliptic surfaces.}
J. Am. Math. Soc. {\bf 25}, no. 2, 355-393 (2012).    
    
\bibitem{HeinSunViaclovskyZhang}
\textsc{Hein, Hans-Joachim; Sun, Song; Viaclovsky, Jeff; Zhang, Ruobing:}
\textit{Nilpotent structures and collapsing Ricci-flat metrics on the K3 surface.}
J. Am. Math. Soc. {\bf 35}, no. 1, 123-209 (2022).    


\bibitem{HondaViaclovsky}
\textsc{Honda, Nobuhiro; Viaclovsky, Jeff:}
\textit{Conformal symmetries of self-dual hyperbolic monopole metrics.}
Osaka J. Math. {\bf 50}, no. 1, 197-249 (2013).
    
    
\bibitem{KPTAnn}
\textsc{Kapovitch, Vitali; Petrunin, Anton; Tuschmann, Wilderich:}
\textit{Nilpotency, almost nonnegative curvature, and the gradient flow on Alexandrov spaces.}
Ann. Math. (2) {\bf 171}, no. 1, 343-373 (2010).

    
\bibitem{KPTcenter}    
 \textsc{Kapovitch, Vitali; Petrunin, Anton; Tuschmann, Wilderich:}
 \textit{On the torsion in the center conjecture.}   
Electron. Res. Announc. Math. Sci. {\bf 25}, 27-35 (2018). 

\bibitem{KapovitchWilking}
\textsc{Kapovitch, Vitali; Wilking, Burkhard:}
\textit{Structure of fundamental groups of manifolds with Ricci curvature bounded below.}
Preprint arXiv:1105.5955 (2011).


\bibitem{LeBrun}
\textsc{LeBrun, Claude:}
\textit{Explicit self-dual metrics on $\mathbb{CP}_2\#\cdots\#\mathbb{CP}_2$.}
J. Differ. Geom. {\bf 34}, no. 1, 223-253 (1991).

\bibitem{MazurRongWang}
\textsc{Mazur, Marcin; Rong, Xiaochun; Wang, Yusheng:}
\textit{Margulis lemma for compact Lie groups.}
Math. Z. {\bf 258}, no. 2, 395-406 (2008).

\bibitem{Mundet19}
\textsc{Mundet i Riera, Ignasi:}
\textit{Finite group actions on homology spheres and manifolds with nonzero Euler characteristic.}
J. Topol. {\bf 12}, no. 3, 744-758 (2019).

\bibitem{Nash}
\textsc{Nash, John C.:}
\textit{Positive Ricci curvature on fibre bundles.}
J. Differ. Geom. {\bf 14}, 241-254 (1979).


\bibitem{ONeill}
\textsc{O’Neill, B.:}
\textit{The fundamental equations of a submersion.}
Mich. Math. J. {\bf 13}, 459-469 (1966).


\bibitem{PanRong}
\textsc{Pan, Jiayin; Rong, Xiaochun:}
\textit{Ricci curvature and isometric actions with scaling nonvanishing property.}
Preprint arXiv:1808.02329 (2018).

\bibitem{Rong96}
\textsc{Rong, Xiaochun:}
\textit{On the fundamental groups of manifolds of positive sectional curvature.}
Ann. Math. (2) {\bf 143}, no. 2, 397-411 (1996).

\bibitem{SantosZamora}
\textsc{Santos-Rodríguez, Jaime; Zamora-Barrera, Sergio:}
\textit{On fundamental groups of $\mathrm{RCD}$ spaces.}
J. Reine Angew. Math. {\bf 799}, 249-286 (2023).


\bibitem{TianYau}
\textsc{Tian, Gang; Yau, Shing Tung:}
\textit{Complete Kähler manifolds with zero Ricci curvature. I.}
J. Am. Math. Soc. {\bf 3}, no. 3, 579-609 (1990).

\bibitem{Wei88}
\textsc{Wei, Guofang:}
\textit{Examples of complete manifolds of positive Ricci curvature with nilpotent isometry groups.}
Bull. Am. Math. Soc., New Ser. {\bf 19}, no. 1, 311-313 (1988).

\bibitem{Wilking00}
\textsc{Wilking, Burkhard:}
\textit{On fundamental groups of manifolds of nonnegative curvature.}
Differ. Geom. Appl. {\bf 13}, no. 2, 129-165 (2000).


\bibitem{Zarhin}
\textsc{Zarhin, Yuri G.:}
\textit{Theta groups and products of abelian and rational varieties.}
Proc. Edinb. Math. Soc., II. Ser. {\bf 57}, no. 1, 299-304 (2014).


    \end{thebibliography}
\end{document}